\documentclass[12pt]{article}

\usepackage{amsmath}
\usepackage{amsfonts}
\usepackage{amssymb, amsfonts, amsmath, nicefrac}
\usepackage{color}
\usepackage{amsthm}
\usepackage{graphicx}
\usepackage{makecell}
\usepackage{diagbox}
\usepackage{subcaption}
\usepackage[ruled,vlined]{algorithm2e}

\oddsidemargin=.2cm \topmargin=-0.9in
\textwidth=6.8in
\textheight=8.9in

\pagenumbering{arabic} \setcounter{page}{0}

\newcommand{\cM}{{\mathfrak M}}

\newcommand{\cR}{{\mathfrak R}}

\newcommand{\cp}{{\mathfrak p}}

\newcommand{\iid}{\stackrel{\mathrm{iid}}{\sim}}

\newtheorem{theorem}{Theorem}[section]
\newtheorem{lemma}{Lemma}[section]
\newtheorem{proposition}{Proposition}[section]

\newtheorem{remark}{Remark}[section]

\newtheorem{assu}{Assumption}[section]

\newcommand{\R}{{\cal R}}

\def\argmin{\mathop{\rm argmin}}
\newcommand{\var}{{\rm Var}}

\newcommand{\V}{{\cal V}}

\newcommand{\B}{{\cal B}}

\newcommand{\M}{{\cal M}}
\newcommand{\X}{{\cal X}}

\newcommand{\W}{\mathcal{W}}
\newcommand{\s}{{\cal S}}

\newcommand{\half}{ \mbox{\small$\frac{1}{2}$}}

\newcommand{\be}{\begin{equation}}
\newcommand{\ee}{\end{equation}}

\newcommand{\bxi}{\mbox{\small\boldmath$\xi$}}

\def\dist{\mathop{\rm dist}}

\def\e{\epsilon}

\def\vv{\vartheta}





\def\bbr{{\Bbb{R}}} 
\def\bbe{{\Bbb{E}}} 

\def\bbd{{\Bbb{D}}}


\begin{document}

\title{\bf Bayesian Distributionally Robust Optimization}

\author{\bf Alexander Shapiro, Enlu Zhou, Yifan Lin \\
School of Industrial and Systems Engineering\\
Georgia Institute of Technology}

\maketitle
\thispagestyle{empty}

\noindent
{\bf Abstract.} We introduce a new framework, Bayesian Distributionally Robust Optimization (Bayesian-DRO), for data-driven  stochastic optimization where the underlying distribution is unknown. Bayesian-DRO contrasts with most of the existing DRO approaches in the use of Bayesian estimation of  the unknown distribution. To make computation of Bayesian updating tractable, Bayesian-DRO first assumes the underlying distribution takes a parametric form with unknown parameter and then computes the posterior distribution of the parameter. To address the model uncertainty brought by the assumed parametric distribution, Bayesian-DRO constructs an ambiguity set of distributions with the assumed parametric distribution as the reference distribution and then optimizes with respect to the worst case in the ambiguity set. We show the consistency of the Bayesian posterior distribution and subsequently the convergence of objective functions and optimal solutions of Bayesian-DRO.   Our consistency result of the Bayesian posterior requires simpler assumptions than the classical literature on Bayesian consistency. We also consider several approaches for selecting the ambiguity set size in Bayesian-DRO and compare them numerically. Our numerical experiments demonstrate the out-of-sample performance of Bayesian-DRO in comparison with Kullback-Leibler-based (KL-) and Wasserstein-based empirical DRO as well as risk-neutral Bayesian Risk Optimization. Our numerical results shed light on how to choose the modeling framework (Bayesian-DRO, KL-DRO, Wasserstein-DRO) for specific problems, but the choice for general problems still remains an important and open question.

\setcounter{equation}{0}

\section{Introduction}\label{sec-intr}
Consider the following  stochastic optimization problem
\begin{equation}\label{stat-1}
\min_{x\in \X} \bbe_Q[G(x,\xi)],
\end{equation}
where $\X\subset \bbr^n$ is a nonempty closed  set, $Q$ is a probability distribution of random vector $\xi$   supported on $\Xi\subset \bbr^d$, and $G:\X\times \Xi\to\bbr$ is the cost function. The notation
\begin{equation}\label{integr}
    \bbe_Q[Z]=\int_\Xi Z(\xi)dQ(\xi)
\end{equation}
emphasizes that the expectation is taken with respect to the probability measure\footnote{Probability measure $Q$ is defined on the sample (measurable) space $(\Xi,\B)$, where $\B$ is the Borel sigma algebra of $\Xi$.} (distribution) $Q$ of random variable (measurable function) $Z:\Xi\to \bbr$.
We use the same notation $\xi$ viewed as random vector or as its realization, the particular meaning will be clear from the context.

In many applications, the underlying `true' distribution of $\xi$ is not known and should be derived (estimated) from the available data.  A popular approach to deal with this distributional uncertainty is to construct  an ambiguity set $\cM$ of probability   distributions and to consider  the following minimax (worst-case) counterpart of problem \eqref{stat-1}:
\begin{equation} \label{distr-1}
\min_{x \in \X}\sup_{Q\in \cM} \bbe_{Q}[G(x, \xi)].
\end{equation}

Such Distributionally Robust Optimization (DRO) approach to stochastic programming has a long history. In the setting of an inventory model, it was considered in the pioneering paper \cite{scarf1958min}. Various methods have been developed for construction of the ambiguity sets, such as methods based on moment constraints (e.g., \cite{delage2010distributionally}), $\phi$-divergence (e.g. \cite{bental1987penalty}), Wasserstein distance (e.g., \cite{Esfahani-Kuhn}), and Bayesian guarantees \cite{gupta2019near}.

A different approach is to fit a parametric family $P_\theta$, $\theta\in \Theta$,  of distributions to the (observed)  data $(\xi_1,...,\xi_N)$. We assume that the parameter set $\Theta\subset \bbr^k$ is closed, and that the parametric family is defined  by density $f(\cdot|\theta)$.
The value of the parameter vector $\theta$  is then  estimated, say by the Maximum Likelihood method. This involves two approximations of the `true' distribution. First, the parametric family is just a model, and as the famous quote is saying ``every model is wrong, but some are useful". Second, the estimated value of the parameter vector may be not accurate especially when the available data are limited. The popular Bayesian approach is aimed at reducing variability of the parameter evaluation. That is, the parameter vector $\theta$  is assumed to be random whose probability distribution is supported on the set $\Theta$ and defined by a prior probability density $p(\theta)$. Then given the data (sample) $\bxi^{(N)}=(\xi_1,...,\xi_N)$, the posterior distribution is determined by Bayes' rule
\begin{equation}\label{stat-2}
 p(\theta|\bxi^{(N)})=\frac{f(\bxi^{(N)}|\theta) p(\theta)}{\int_\Theta f(\bxi^{(N)}|\theta)p(\theta) d\theta},
\end{equation}
where  $f(\bxi^{(N)}|\theta)=\prod_{i=1}^N f(\xi_i|\theta)$ is the conditional density of the sample by assuming  $\xi_i$'s are independent and identically distributed (i.i.d.).

Recently, \cite{wu2018bayesian} takes the Bayesian approach with the motivation to use the Bayesian posterior distribution (which encodes the likelihoods of all possibilities) to replace the ambiguity set (which treats every possibility inside the set with equal probability), and further take a risk functional with respect to the posterior distribution to allow more flexible risk attitude. This leads to the following  Bayesian Risk Optimization (BRO) formulation
\begin{equation}\label{BRO}
\min_{x \in \X}\rho_{\theta_N}\left( \bbe_{\xi|\theta}[G(x, \xi)]\right),
\end{equation}
where $\rho_{\theta_N}$ is a risk functional (such as expectation, mean-variance, Value-at-Risk, Conditional Value-at-Risk) taken with respect to the posterior distribution $p(\theta|\bxi^{(N)})$, and $\bbe_{\xi|\theta}$ is the expectation taken with respect to the parametric distribution $f(\xi|\theta)$ conditional on $\theta$. However, as mentioned above, the assumed parametric family introduces model uncertainty.

In this paper, we propose a new formulation termed Bayesian Distributionally Robust Optimization (Bayesian-DRO), which poses robustness against
the model uncertainty (ambiguity)  of the assumed parametric distributions while maintaining the advantage of Bayesian estimation when data are limited. It constructs an ambiguity set by taking the parametric distribution as the reference distribution and optimizes the worst-case of the Bayesian average of the true problem.  More specifically, for every $\theta\in \Theta$  let $\cM^\theta$ be a set of probability measures on $(\Xi,\B)$.
We propose the following DRO formulation: 

\begin{equation}\label{stat-5a}
\min_{x \in \X}\bbe_{\theta_N}\left[\sup_{Q\in\cM^\theta}  \bbe_{Q|\theta}[G(x,\xi)]\right],
\end{equation}
where  $\bbe_{Q|\theta}$ is the expectation with respect to distribution $Q$ of $\xi$ conditional on $\theta$
and
\begin{equation}\label{postexp}
\bbe_{\theta_N}[Y]:=\int_\Theta  Y(\theta)p(\theta|\bxi^{(N)})d\theta
\end{equation}
denotes the expectation of random variable $Y:\Theta\to\bbr$  with respect to the posterior distribution $p(\theta|\bxi^{(N)})$.
We refer to $\cM^\theta$ as the {\em ambiguity set}; a specific construction of the ambiguity sets will be discussed in the next section.   Please note that the posterior distribution depends on choice of the prior density  $p(\theta)$ and parametric model  $f(\cdot|\theta)$. The choice of $p(\theta)$ and $f(\cdot|\theta)$   could both be subject to ambiguity.  In this paper we mainly deal with ambiguity with respect to the parametric model. In Section~\ref{sec-variants} we give a brief discussion of modeling ambiguity of the posterior distribution, which of course also depends on ambiguity of the  prior density.

We show the consistency of Bayesian posterior distributions. In particular, when the parametric model is mis-specified (i.e., when the true distribution lies outside the parametric family of distributions), the posterior distribution converges to the parametric distribution which has the minimum Kullback-Leibler (KL) divergence (within the parametric family) from the true distribution.  Consistency of Bayesian posterior distribution under model mis-specification has been studied in the literature (e.g.,  \cite{Ghosal2000ConvergenceRO,Kleijn2006MisspecificationII, Lian2009Posterior}), but the assumptions required in our results are in general simpler and easier to verify than constructing a testing sequence as usually required in the existing literature. Built on this result, we show the objective functions and optimal solutions of Bayesian-DRO are strongly consistent.

When the ambiguity set is constructed using the KL divergence and its radius is small, we show that Bayesian-DRO is approximately equivalent to a weighted sum of the mean and standard deviation under the posterior distribution, where the weight depends on the size of the ambiguity set. This reveals that the robustness of Bayesian-DRO comes from the trade-off between the posterior mean and variability of the solution performance.   Similar interpretation of robustness has also been observed in divergence-based empirical DRO (see \cite{duchi2021statistics, gotoh2018robust}), but the difference is that  empirical DRO trades off the empirical mean and standard deviation (i.e., with respect to the empirical distribution) and in Bayesian-DRO these are with respect to the posterior distribution.  To determine the ambiguity set size, we propose several theoretical and empirical methods and compare their performance numerically.

The rest of the paper is organized as follows. Section~\ref{sec-distr} formally introduces the Bayesian-DRO formulation, discusses the construction of the ambiguity set, and understands the robustness of Bayesian-DRO by sensitivity analysis. Section~\ref{sec-conv} analyzes convergence of Bayesian-DRO and considers how to determine the size of the ambiguity set. Section~\ref{sec-numeric} presents numerical results to illustrate the performance of Bayesian-DRO in comparison with empirical DRO as well as BRO-mean. Section~\ref{sec-conclusion} concludes the paper with a brief discussion of future work.

\setcounter{equation}{0}
\section{Bayesian distributionally robust optimization}
\label{sec-distr}
The risk neutral formulation of the Bayesian counterpart of problem \eqref{stat-1} can be written as
\begin{equation}\label{stat-3}
\min_{x\in \X} \left\{g(x):=\bbe_{\theta_N}\left[ \bbe_{\xi|\theta}[G(x,\xi)]\right]\right\},
\end{equation}
where the expectation $\bbe_{\xi|\theta}$ is taken with respect to the distribution of $\xi$ conditional on $\theta$, defined by density $f(\cdot|\theta)$, and the expectation  $\bbe_{\theta_N}$ is taken with respect to the posterior distribution  $p(\theta|\bxi^{(N)})$ defined in \eqref{stat-2}. Note that the nested expectation in \eqref{stat-3} can be considered as the expectation with respect to the joint  distribution of $\xi$ and $\theta$. An unbiased estimate of $g(x)$ can be obtained by generating a random realization of $\theta$ according to the posterior distribution $p(\theta|\bxi^{(N)})$  and then generating a random realization of $\xi\sim f(\cdot|\theta)$ conditional on generated $\theta$. This allows to apply either the Sample Average Approximation (SAA) or Stochastic Approximation (SA) optimization methods for solving problem  \eqref{stat-3},  provided that there is an efficient way  to generate such random samples. 

Now let us consider the uncertainty  with respect to the choice of the parametric family of distributions of $\xi$, with a specified prior distribution of $\theta$,   which is often taken as an uninformative prior when there is no prior knowledge. We view \eqref{stat-3} as the {\em nominal model } with observed (given)  data $\bxi^{(N)}$, and the reference parametric family defined by the probability density function (pdf) $f(\cdot|\theta)$, $\theta\in \Theta$. We assume that the ambiguity set $\cM^\theta$ consists of probability measures defined by density functions, i.e., every distribution of the ambiguity set has respective pdf $q(\cdot|\theta)$, $\theta\in \Theta$. We also assume that the ambiguity set contains the nominal distribution. There are many ways how the ambiguity set can be constructed, and we will discuss a specific construction, well suited for our purposes, in Section \ref{sec-constr} below.

  By constructing an ambiguity $\cM^\theta$ for each fixed $\theta$ in \eqref{stat-3}, we define the Bayesian distributionally robust optimization problem \eqref{stat-5a}, which is re-stated below for clarity:
\begin{equation}\label{BDRO}
\min_{x \in \X}\bbe_{\theta_N}\left[\sup_{Q\in\cM^\theta}  \bbe_{Q|\theta}[G(x,\xi)]\right].
\end{equation}

For this problem,  define the following distributionally robust functional
\begin{equation}\label{stat-5}
\cR(Z) :=\bbe_{\theta_N}\left[\sup_{Q\in\cM^\theta}  \bbe_{Q|\theta}[Z]\right].
\end{equation}

This  functional is defined on an appropriate  linear space of measurable functions (random  variables) $Z:\Xi\to\bbr$. The functional $\cR$ can be viewed as a nested conditional functional. We can refer to \cite{pichler:2020} for a detailed discussion  of such conditional functionals. For random variable $Z:\Xi\to\bbr$, the respective expectation  in  \eqref{stat-5} is
\begin{equation}\label{denf}
\bbe_{Q|\theta}[Z]=\int_\Xi Z(\xi)q(\xi|\theta) d\xi,
\end{equation}
where $q(\cdot|\theta)$ is the pdf of $Q\in \cM^\theta$. The maximum (supremum) in the right hand side of  \eqref{stat-5} is taken over all pdfs $q_\theta(\xi)=q(\xi|\theta)$ from the ambiguity set $\cM^\theta$.

The distributionally robust  counterpart of problem \eqref{stat-3} is  obtained by employing     the above distributionally robust functional. That is, problem  \eqref{BDRO}  can be written as
\begin{equation}\label{stat-6}
\min_{x\in \X} \cR(G_x),
\end{equation}
where $G_x(\xi):=G(x,\xi)$.
Of course, it should be verified that the above distributionally robust functionals are well defined for every $Z=G_x$, $x\in \X$. We will discuss this in the next section.

\begin{remark} \rm
In problem \eqref{BDRO}, if we take $\cM^{\theta}$ constant for all $\theta\in\Theta$, i.e., $\cM^\theta = \cM, \forall \theta\in\Theta$, then this problem becomes a DRO problem
\begin{equation}\label{eqn1}
\min_{x\in\mathcal{X}} \sup_{Q\in\cM}\bbe_Q[G(x,\xi)].
\end{equation}

Hence, $\{\cM^\theta, \theta\in\Theta\}$ in \eqref{BDRO} can be viewed as a finer characterization of the ambiguity set based on the likelihood of each $\theta$, whereas the usual DRO takes a ``blanket'' ambiguity set for every $\theta$. Moreover, the outer expectation in \eqref{BDRO} aggregates all $\theta\in\Theta$ by their posterior density rather than fixating on the worst case in the ambiguity set.

When $\cM^\theta$ is a singleton consisting of only $f(\cdot|\theta)$, then \eqref{BDRO} reduces to  \eqref{stat-3} or BRO-mean (i.e., \eqref{BRO} with expectation being the risk functional). This implies that as opposed to BRO-mean, Bayesian-DRO imposes additional robustness with respect to the possibly misspecified likelihood.
\end{remark}


\subsection{Construction of the ambiguity set}
\label{sec-constr}

Consider now construction of the ambiguity set for the parametric family.
The functional
\begin{equation}\label{risk-8}
 \varrho_{|\theta} (\cdot):=\sup_{Q\in\cM^\theta}  \bbe_{Q|\theta}[\,\cdot\,]
\end{equation}
can be viewed as a coherent risk measure conditional on $\theta\in \Theta$.  We have that $\bbe_{Q|\theta}[Z]$ is a function of $\theta\in \Theta$  defined by the corresponding integral (see \eqref{denf}) which is assumed to be well defined. It could happen that by taking the maximum (supremum) of such functions over possibly uncountable family of distributions, the resulting value    $\varrho_{|\theta} (Z)$, considered as a function of $\theta\in \Theta$, is not measurable. In that case the corresponding integral, defining $\cR(Z)$, does not exist. We will deal with this issue in the specific construction below.

There are many ways how the ambiguity sets can be constructed. The following approach, of the so-called $\phi$-divergence (\cite{csiszar1964informationstheoretische},\cite{morimoto1963markov}), is general and flexible. Let $\phi:\bbr\to \bbr_+\cup\{+\infty\}$ be a convex lower semi-continuous function such that $\phi(1)=0$ and $\phi(x)=+\infty$ for $x<0$. For $\e\ge 0$  and  $f_\theta(\xi):= f(\xi|\theta)$ define the corresponding set of pdfs $q_\theta(\xi)=q(\xi|\theta)$, representing the ambiguity set, as

\begin{equation}\label{densit}
\cM^\theta_\e:=\left\{q_\theta:\int_\Xi\phi\big( q_\theta(\xi)/f_\theta(\xi) \big) f_\theta(\xi)d\xi\le \e\right\}.
\end{equation}

That is, the ambiguity set consists of pdfs having $\phi$-divergence $\le \e$ from  the reference parametric pdf  $f(\xi|\theta)$. Note that $\cM^\theta_\e$ contains the reference measure (distribution) defined by the pdf $f(\xi|\theta)$. Note also that the probability measure defined by the pdf $q_\theta$ in \eqref{densit} is assumed to be absolutely continuous with respect to the reference measure $f_{\theta}$ for every $\theta\in \Theta$.

Consider  the conjugate $\phi^*(y)=\sup_{x\ge 0}\{yx-\phi(x)\}$ of $\phi$. Note that  the conjugate of $\lambda \phi(\cdot)$  is  $(\lambda \phi)^*(y)=\lambda \phi^*(y/\lambda)$ for $\lambda>0$.
It can be shown by duality arguments (cf., \cite{bayraksan2015data},\cite{bental1987penalty},\cite{shapiro2017distributionally}), that for a random variable $Z:\Xi\to \bbr$,
\begin{equation}\label{dualphi}
   \varrho_{|\theta}(Z)  =\inf_{\lambda \ge 0,\mu}\left\{
 \lambda \e+\mu+\bbe_{\xi|\theta}\big[(\lambda \phi)^*(Z-\mu)\big]\right\}.
\end{equation}
Hence, the functional \eqref{stat-5} can be  written as
\begin{equation}
 \label{funr-2}
 \cR(Z) = \bbe_{\theta}\Big[\underbrace{\inf_{\lambda> 0,\mu} \bbe_{\xi|\theta} \big [ \lambda\e+\mu+\lambda \phi^*\big((Z-\mu)/\lambda\big) \big ]}_{ \varrho_{|\theta}(Z)}\Big].
 \end{equation}

The measurability of the infimum $ \varrho_{|\theta}(Z)$  in the right hand side of \eqref{dualphi}, considered as a function of $\theta$, can be verified under mild regularity conditions. For example, we have the following result.

\begin{proposition}
Suppose that for almost every (with respect to the Lebesgue measure) $\xi$ the density  function $f(\xi|\theta)$ is lower semicontinuous  in $\theta\in \Theta$. Then $ \varrho_{|\theta}(Z)$ is measurable in $\theta$.
\end{proposition}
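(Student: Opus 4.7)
Starting from the dual representation in~\eqref{funr-2},
\[
\varrho_{|\theta}(Z) \;=\; \inf_{\lambda>0,\,\mu\in\bbr} H(\theta,\lambda,\mu), \qquad H(\theta,\lambda,\mu) \;:=\; \lambda\e + \mu + \int_\Xi \lambda\phi^*\!\big((Z(\xi)-\mu)/\lambda\big)\, f(\xi|\theta)\, d\xi,
\]
I would reduce the proof to two claims: (i) for each fixed $(\lambda,\mu)$, the map $\theta\mapsto H(\theta,\lambda,\mu)$ is Borel measurable in $\theta$; and (ii) the above infimum equals the infimum over a fixed countable dense subset $D\subset(0,\infty)\times\bbr$. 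Granted (i) and (ii), $\varrho_{|\theta}(Z)=\inf_{(\lambda,\mu)\in D} H(\theta,\lambda,\mu)$ is a countable infimum of Borel functions, hence Borel measurable in $\theta$.

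For (i), the map $\xi\mapsto h_{\lambda,\mu}(\xi):=\lambda\phi^*((Z(\xi)-\mu)/\lambda)$ is Borel measurable in $\xi$, since $Z$ is Borel and $\phi^*$ is convex and lsc (hence Borel). Because $\phi^*$ may take negative values, I would split $h_{\lambda,\mu}=h^+-h^-$ into its nonnegative measurable parts. The lsc hypothesis $\liminf_{\theta_n\to\theta_0} f(\xi|\theta_n)\ge f(\xi|\theta_0)$ (a.e.\ $\xi$), combined with Fatou's lemma applied to the nonnegative sequences $h^\pm(\xi)\,f(\xi|\theta_n)$, shows that $\theta\mapsto\int h^\pm(\xi)\,f(\xi|\theta)\,d\xi$ is $[0,+\infty]$-valued lower semicontinuous, and in particular Borel measurable. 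Their difference, together with the affine term $\lambda\e+\mu$, gives Borel measurability of $H(\cdot,\lambda,\mu)$.

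For (ii), for each fixed $\xi$ the integrand $(\lambda,\mu)\mapsto\lambda\phi^*((Z(\xi)-\mu)/\lambda)$ is the perspective transformation of the convex lsc function $\phi^*$, hence jointly convex on $(0,\infty)\times\bbr$. Integration preserves convexity, so $H(\theta,\cdot,\cdot)$ is convex on $(0,\infty)\times\bbr$ with value $+\infty$ outside its effective domain. A convex function on $\bbr^2$ is continuous on the interior of its effective domain, and a standard segment argument shows that the infimum over $(0,\infty)\times\bbr$ coincides with the infimum over this interior; continuity then lets me replace this by the infimum over $D$ intersected with that interior, which equals $\inf_{(\lambda,\mu)\in D} H(\theta,\lambda,\mu)$ under the convention $H=+\infty$ off the domain. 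The main obstacle I anticipate is not the Fatou step but the bookkeeping in this reduction around degenerate cases: when the effective domain has empty interior, $\varrho_{|\theta}(Z)=+\infty$ on the corresponding $\theta$-set so measurability is automatic; when both $\int h^+ f$ and $\int h^- f$ diverge, I would set $H=+\infty$ by convention, consistent with restricting the inner infimum to those $(\lambda,\mu)$ at which both integrals are finite.
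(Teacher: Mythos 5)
Your first step (measurability of $\theta\mapsto H(\theta,\lambda,\mu)$ for fixed $(\lambda,\mu)$ via the lsc hypothesis and Fatou) is sound and is essentially the same Fatou/lower-semicontinuity ingredient the paper uses. Where you diverge is in how the infimum over the uncountable set $(0,\infty)\times\bbr$ is handled: the paper establishes joint lower semicontinuity of the integral in $(\lambda,\mu,\theta)$ and then invokes normal-integrand theory (Rockafellar--Wets, Corollary 14.41 and Theorem 14.37), under which inf-projections of normal integrands are automatically measurable. You instead try to reduce the infimum to a countable dense set $D$ using convexity of $H(\theta,\cdot,\cdot)$. That reduction is where your argument has a genuine gap.

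The problem is the degenerate case you dismiss. You assert that when the effective domain of $H(\theta,\cdot,\cdot)$ has empty interior, $\varrho_{|\theta}(Z)=+\infty$. That is false: a convex function whose effective domain is a nonempty lower-dimensional set (say, a segment or a ray in $(0,\infty)\times\bbr$) is finite on that set, and its infimum there can be finite, while a fixed countable dense subset $D$ of $(0,\infty)\times\bbr$ will in general miss the set entirely, so $\inf_{D}H=+\infty\neq\inf H$. To close this you would need either (a) to prove that for these particular integrands the effective domain, when nonempty, always has nonempty interior --- plausible under \eqref{assum} for the KL case, since $\phi^*$ is nondecreasing and the domain then contains upward rays in $\mu$, but not established in the generality of arbitrary $\phi$-divergences in which the proposition is stated --- or (b) to handle lower-dimensional domains with a relative-interior argument, which then also requires showing that the set of ``degenerate'' $\theta$ is itself measurable before you can patch the definition there. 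Your other conventions ($\infty-\infty$ cases, $H=+\infty$ off the domain) are manageable bookkeeping, but the empty-interior case as written is an incorrect claim, not a harmless convention; the paper's route through normal integrands avoids the issue entirely because measurability of the inf-projection does not require any nondegeneracy of the domain.
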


\begin{proof}
Since the conjugate function $\phi^*(\cdot)$ is lower semicontinuous and $f(\xi|\cdot)$ is lower semicontinuous,   we have that for almost every $\xi$ the function
 $\lambda\e+\mu+\lambda \phi^*\big((Z(\xi)-\mu)/\lambda\big) f(\xi|\theta)$ is
   lower semicontinuous  in $(\lambda,\mu,\theta)$. It follows by Fatou's lemma that its integral
   \[
   \bbe_{\xi|\theta} \big [ \lambda\e+\mu+\lambda \phi^*\big((Z-\mu)/\lambda\big) \big ]=\int_\Xi [ \lambda\e+\mu+\lambda \phi^*\big((Z(\xi)-\mu)/\lambda\big)   ]
f(\xi|\theta) d\xi
\]
is  lower semicontinuous  in $(\lambda,\mu,\theta)$ and hence is measurable.
Therefore  the above integral is a normal integrand  \cite[Corollary 14.41]{rockafellar2009variational}, and hence its  infimum over $(\lambda,\mu)\in \bbr_+\times \bbr$ is measurable   \cite[Theorem 14.37]{rockafellar2009variational}.
\end{proof}

\subsubsection{Kullback-Leibler divergence}
The Kullback-Leibler (KL) divergence from  a pdf $q(\cdot)$ to a pdf $f(\cdot)$, on $\Xi$, is
\begin{equation}\label{kldiver}
 D_{KL}(q\|f):=\int_\Xi q(\xi)\ln \big (q(\xi)/f(\xi)\big) d\xi=
 \int_\Xi (q(\xi)/f(\xi))\ln \big(q(\xi)/f(\xi)\big)  f(\xi) d\xi.
\end{equation}

The  KL-divergence is a particular instance of the
$\phi$-divergence with
\[
\phi(x):=x\ln x-x+1,\;x\ge 0.
\]

The corresponding ambiguity set $\cM_\e^\theta$ is formed by pdfs $q_\theta$ such that $D_{KL}(q_\theta\|f_\theta)\le \e$. We will show that the KL-divergence approach is in accordance with the consistency of the Bayesian posterior distribution in Section~\ref{sec-Bayesian}, and therefore is a natural approach to construction of the corresponding ambiguity set.

We make the following assumption in the remainder of the paper:
for $x\in \X$ and $Z:=G_x$ it follows that
\begin{equation}\label{assum}
 \bbe_{\xi|\theta} \big[ e^{
t Z}\big]<+\infty\;\;\text{ for any}\;   t\in \bbr\;\text{and}\;\theta\in \Theta.
\end{equation}

For the KL-divergence, given $\lambda>0$ the minimizer over $\mu$ in \eqref{dualphi} is given by
$
\mu=\lambda  \ln \bbe_{\xi|\theta} \big[e^{Z/\lambda}\big],
$
and hence the minimum becomes
\begin{equation}\label{kld-2}
  \varrho_{|\theta}(Z) = \inf_{\lambda > 0}\left\{
 \lambda \e  +\lambda\ln \bbe_{\xi|\theta} \big[ e^{
 Z/\lambda}\big]\right\}.
\end{equation}

Consequently, the DRO problem \eqref{stat-5} can be written as
\begin{equation}\label{dro-nest}
\min_{x\in \X}\bbe_{\theta_N}\left[\inf_{\lambda>0}\big\{\lambda\e+\lambda\ln \bbe_{\xi|\theta}  [ e^{
 G_x/\lambda}  ]\big\}\right].
\end{equation}

The above optimization problem \eqref{dro-nest} can be viewed as a two-stage stochastic  program with the second stage given by the optimization problem with respect to $\lambda>0$. It can be solved, for example, by the Sample Average Approximation (SAA) method; we will discuss this further in Section \ref{sec-numeric}.

\subsubsection{Robustness via sensitivity analysis}
We now consider the sensitivity of the Bayesian-DRO objective value with respect to $\e$, size of the ambiguity set. Note that for $\e=0$ the minimum (infimum)  in \eqref{kld-2} is attained  as $\lambda\to+\infty$, and equals $\bbe_{\xi|\theta}[Z]$. For $\e>0$ the optimization problem \eqref{kld-2} has unique optimal solution $\bar{\lambda}$, with $\bar{\lambda}$ tending to $+\infty$ as $\e\downarrow 0$.

Consider  minimization problem in the right hand side of  \eqref{kld-2} for $\theta\in \Theta$ and small $\e>0$. By  condition \eqref{assum},  the log-moment generation function $\Lambda(t):=\ln\bbe_{\xi|\theta}\left[e^{tZ}\right]$ is finite valued and  infinitely differentiable with its  first and second  derivatives at $t=0$ being the respective mean and variance. Consequently, by using the second order Taylor expansion of the log-moment generating function, we can write
\begin{equation}\label{appr-1}
 \lambda \e  +\lambda\ln \bbe_{\xi|\theta} \big[ e^{
 Z/\lambda}\big]=  \lambda \e +\mu +\half  \sigma^2/\lambda +O(\lambda^{-2}),
\end{equation}
where\footnote{Of course, $\mu$ and $\sigma$   depend on $\theta$, we suppress this in the notation.}
$\mu:=\bbe_{\xi|\theta}[Z]$ , $\sigma^2:=\var_{\xi|\theta}(Z)$ by minimizing the right hand side of \eqref{appr-1} we obtain approximation $ \bar{\lambda} \approx \frac{ \sigma}{ \sqrt{2\e}}$ of the optimal solution of \eqref{kld-2}, and consequently for small $\e>0$ the approximation
\begin{equation}\label{appr-2}
 \min_{\lambda > 0}\left\{
 \lambda \e  +\lambda\ln \bbe_{\xi|\theta} \big[ e^{Z/\lambda}\big]\right\}
 \approx \mu+ \sigma \sqrt{2\e}.
\end{equation}

Plugging the approximation \eqref{appr-2} into the Bayesian-DRO problem \eqref{dro-nest} reveals that when the ambiguity set is small, Bayesian-DRO is approximately equal to a weighted sum of the posterior mean and posterior standard deviation of the performance function, with weight depending on the ambiguity set size $\e$. A similar interpretation of mean-variance trade-off has also been observed for divergence-based empirical DRO (see \cite{duchi2021statistics, gotoh2018robust}), but its mean and standard deviation are with respect to the empirical distribution. Moreover, \cite{gotoh2021calibration} shows that the empirical DRO can be interpreted as a trade-off between the mean and worst-case sensitivity (we refer the reader to \cite{gotoh2021calibration} for the definition of worst-case sensitivity); whether such an interpretation can be extended to Bayesian-DRO will be left as a future work.


\subsubsection{Variants of Bayesian-DRO Formulations}\label{sec-variants}

In this section we briefly discuss some other possible DRO formulations in the Bayesian setting and their pros and cons. We first consider the alternative formulation
 \begin{equation}\label{dro-1}
\min_{x\in \X, \lambda>0}\bbe_{\theta_N}\left[\lambda\e+\lambda\ln \bbe_{\xi|\theta}  [ \exp(
 G_x/\lambda)  ]\right].
\end{equation}
The nested Bayesian-DRO problem \eqref{dro-nest} can be viewed as a relaxation of problem \eqref{dro-1}. In \eqref{dro-1} the parameter $\lambda$ is chosen before observing  a realization of $\theta$, while in \eqref{dro-nest} the parameter $\lambda$ is a function of $\theta$. We have that the optimal value of the Bayesian-DRO  problem \eqref{dro-nest} is less than or equal to the optimal value of problem \eqref{dro-1}. It could be noted that the relaxation   \eqref{dro-1} is computationally easier to solve than  \eqref{dro-nest}, since it avoids nested Monte Carlo simulation that is needed in solving the nested formulation \eqref{dro-nest}.

Now let's consider another variant of formulation.
  As it was mentioned in section \ref{sec-intr}, the posterior distribution depends on the choice of the prior density and parametric family.
In the above derivations we considered the ambiguity with respect to the reference parametric  pdf $f(\cdot|\theta)$,  and consequently  the corresponding Bayesian-DRO problem \eqref{BDRO}. It is possible to apply the KL-divergence  ambiguity  approach to the posterior distribution rather than the parametric family. That is for $\e>0$  let $\M_\e$ be the family of pdfs $\cp(\theta)$, $\theta\in \Theta$,  such that $D_{KL}\left(\cp\|p(\cdot|\bxi^{(N)})\right)\le \e$. Let
\begin{equation}\label{dklfunc}
 \R(Y):=\sup_{\cp\in \M_\e}\left\{\bbe_\cp[Y]= \int_\Theta Y(\theta) \cp(\theta) d\theta\right\}
\end{equation}
be the corresponding distributionally robust functional defined on a space of random variables $Y:\Theta\to \bbr$. Similar to \eqref{kld-2} we have the following representation of that functional
\begin{equation}\label{dklfun-2}
 \R(Y)=\inf_{\lambda > 0}\left\{
 \lambda \e  +\lambda\ln \bbe_{\theta_N} \big[ e^{
 Y/\lambda}\big]\right\}.
\end{equation}

The corresponding DRO problem is obtained by replacing the expectation $\bbe_{\theta_N}$ in \eqref{stat-3} with $\R$, that is  minimization of $\R\big(\bbe_{\xi|\theta}[G_x]\big)$ over $x\in \X$. By \eqref{dklfun-2} we can write this optimization problem as
 \begin{equation}\label{dklfun-3}
\min_{x\in \X, \lambda>0}
  \lambda\e+\lambda\ln \bbe_{\theta_N}   \big[ \exp\big(\bbe_{\xi|\theta}[ G_x/\lambda]\big)\big  ] .
\end{equation}

Now  by interchanging the expectation $\bbe_{\theta_N}$ and the supremum in the definition \eqref{stat-5} of the distributionally robust functional $\cR$, we can consider the functional
\begin{equation}\label{distfn-1}
\Re(Z) :=\sup_{ Q\in\cM^\theta}\bbe_{\theta_N}\left[  \bbe_{Q|\theta}[Z]\right]
\end{equation}
and the corresponding Bayesian-DRO problem. We have that $\Re(\cdot)\le \cR(\cdot)$ and the inequality can be strict since the extreme measure $Q$ in  \eqref{stat-5} could depend on $\theta$. The maximization in \eqref{distfn-1}  is over the pdfs of the ambiguity set. Because the expectation with respect to these pdfs is inside the expectation $\bbe_{\theta_N}$, it is not clear how to represent the corresponding optimization problem in the KL-divergence framework. It is also not clear what could  be  an interpretation of the functional $\Re$  and the corresponding optimization problem.

\setcounter{equation}{0}
\section{Analysis}
\label{sec-conv}
Suppose that the data  $\xi_1,\ldots,\xi_N$  are generated i.i.d. from the true (data-generating) distribution $Q_*$, i.e., $\xi_i\iid Q_*$, and that $Q_*$ has density (pdf) denoted $q_*$.
Recall that $p(\theta)$ denotes the prior pdf, $f(\xi|\theta)$ denotes the reference parametric family, and  $p(\theta|\bxi^{(N)})$ denotes the posterior pdf as defined in \eqref{stat-2}.

\subsection{Consistency of Bayesian posterior distributions}\label{sec-Bayesian}
In this section we discuss  convergence of the posterior pdf $p(\theta|\bxi^{(N)})$ as $N$ goes to infinity.  The analysis of this section is a first step in establishing consistency of the Bayesian-DRO, discussed in the next section. We   make the following assumptions which are relatively easy to verify and well suited for the considered framework.

\begin{assu} \label{assump}
{\rm (i)} The set $\Theta$ is convex  compact with nonempty interior.
{\rm (ii)} $\ln p(\theta)$ is bounded on $\Theta$, i.e., there are constants $c_1>c_2>0$ such that $c_1\ge p(\theta)\ge c_2$ for all $\theta\in \Theta$.
{\rm (iii)}  $q_*(\xi)>0$  for $\xi\in \Xi$.
{\rm (iv)} $f(\xi|\theta)>0$, and hence  $p(\theta|\bxi^{(N)})>0$,  for all $\xi\in \Xi$ and $\theta\in \Theta$. {\rm (v)}     $f(\xi|\theta)$ is continuous in $\theta\in \Theta$.
{\rm (vi)} $\ln f(\xi|\theta)$, $\theta\in \Theta$,  is dominated by an integrable (with respect to $Q_*$)  function.
\end{assu}
Assumptions~\ref{assump}(i)-(ii)   provide    sufficient conditions for uniform convergence of the posterior distribution. Without these assumptions, convergence of the posterior still holds but may not be uniform. The rest of Assumption~\ref{assump} are regularity assumptions.

Consider function
\begin{equation}\label{funpsi}
 \psi(\theta):=\bbe_{q_*}\big [\ln f(\xi|\theta)\big]=\int_\Xi   \ln  f(\xi|\theta) Q_*(d \xi)=
 \int_\Xi q_*(\xi) \ln  f(\xi|\theta)d\xi.
\end{equation}

Under Assumption~\ref{assump},  the function $\psi:\Theta\to \bbr$ is real valued. Moreover,    we have that for $\theta\in \Theta$,
\[
\lim_{\theta'\to\theta}\psi(\theta') = \lim_{\theta'\to\theta} \int_\Xi  \ln  f(\xi|\theta')  Q_*(d \xi)=
\int_\Xi  \lim_{\theta'\to\theta} \ln  f(\xi|\theta')  Q_*(d \xi)=\psi(\theta),
\]
where we use continuity of $f(\xi|\theta)$ in $\theta$, and  the interchange of  the limit and integral follows by the  Dominated Convergence Theorem since $\ln f(\cdot|\theta)$ is dominated by an integrable function. Thus   $\psi(\theta)$ is continuous on $\Theta$.

Consider the KL-divergence
\[
D_{KL}\big(q_*\| f_\theta\big)=\int_\Xi q_*(\xi)\ln\left(\frac{q_*(\xi)}{f(\xi|\theta)}\right )d\xi=\bbe_{q_*}[\ln q_*(\xi)]-\underbrace{\bbe_{q_*}[\ln f(\xi|\theta)]}_{\psi(\theta)}.
 \]

Let
\[
\Theta^*:=\arg\min_{\theta\in \Theta} D_{KL}(q_*\| f_\theta)= \arg\max_{\theta\in \Theta} \underbrace{\bbe_{q_*}[\ln f(\xi|\theta)]}_{\psi(\theta)}.
\]

Since the set $\Theta$ is compact and $\psi(\cdot)$ is continuous, it follows that the set $\Theta^*$ is nonempty. Note that if the model is correct, then $\Theta^*=\{\theta\in \Theta:q_*=f_\theta\}$.

For a point $\theta^*\in \Theta^*$ and $\e>0$, define the sets
\begin{equation}\label{vsets}
V_\e:=\{\theta\in \Theta:\psi(\theta^*)-\psi(\theta)\ge  \e\},\;
 U_\e:=\Theta\setminus  V_\e=
 \{\theta\in \Theta:\psi(\theta^*)-\psi(\theta)<  \e\}.
\end{equation}

Since $\psi(\theta^*)=\max_{\theta\in \Theta}\psi(\theta)$, the sets $V_\e$  and $U_\e$  remain the same for any $\theta^*\in \Theta^*$.
Note that $U_\e$ is a  neighborhood of the set $\Theta^*$. Since the set $\Theta$ is convex with nonempty interior,  it follows that  volume $\int_{U_\e}d\theta$, of the set $U_\e$, is greater than zero for any $\e>0$.

The following theorem shows that the posterior pdf $p(\theta|\bxi^{(N)})$ converges almost surely to a distribution with probability mass   concentrated on $\Theta^*$. If $\Theta^*$ is the singleton $\{\theta^*\}$, then $p(\theta|\bxi^{(N)})$ converges almost surely to the Dirac delta function $\delta(\theta^*)$. The convergence is uniform in $\theta\in\Theta$ regardless of the choice of the prior pdf $p(\theta)$. In what follows by writing w.p.1 (almost surely) we mean that the considered property holds with probability one with respect to the probability measure $Q_*^\infty$. Construction  of the  probability measure $Q_*^\infty$ for the sequence $\{\xi_1,...\}$ is verified  by Kolmogorov's existence theorem. By saying that: ``a property holds w.p.1 for $N$ large enough",  we mean that there is a subset of the considered probability space having  measure zero such that for any element of the probability space outside this measure-zero set, there is $N'$ (depending on that element) such that the property holds for that element for any $N\ge N'$.


\begin{lemma} Suppose that  Assumption~\ref{assump} holds.
 Then for    $0<\beta<\alpha <\e$, it follows that  w.p.1 for $N$ large enough
\begin{equation}
\label{convest}
\sup_{\theta\in V_\e}  p(\theta|\bxi^{(N)})\le \kappa(\beta)^{-1} e^{-N(\alpha-\beta)} ,
\end{equation}
where  $V_\e$ and $U_\e$  are  defined in \eqref{vsets},  and\,\footnote{Recall that under Assumption \ref{assump},   $\kappa(\beta) >0$ for any $\beta>0$.} $\kappa(\beta):=\int_{U_\beta} d\theta$.
\end{lemma}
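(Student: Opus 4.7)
The plan is to express the posterior as the ratio
$$
p(\theta|\bxi^{(N)})=\frac{f(\bxi^{(N)}|\theta)\,p(\theta)}{\int_\Theta f(\bxi^{(N)}|\theta')\,p(\theta')\,d\theta'},
$$
then bound the numerator uniformly from above on $V_\e$ and bound the denominator from below by restricting the integral to $U_\beta$. Both bounds will be exponential in $N$, and the prefactors $c_1,c_2$ from Assumption~\ref{assump}(ii) together with $\kappa(\beta)$ provide the normalization. The analytic engine is a uniform strong law of large numbers applied to the log-likelihood $\tfrac{1}{N}\sum_{i=1}^N\ln f(\xi_i|\theta)$.

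First I would establish that, w.p.1,
$$
\sup_{\theta\in\Theta}\left|\tfrac{1}{N}\sum_{i=1}^N\ln f(\xi_i|\theta)-\psi(\theta)\right|\longrightarrow 0.
$$
This follows from compactness of $\Theta$ (Assumption~\ref{assump}(i)), continuity of $\ln f(\xi|\theta)$ in $\theta$ (Assumptions~\ref{assump}(iv)--(v)), and the $Q_*$-integrable dominator in Assumption~\ref{assump}(vi), via the standard covering argument: for each $\theta$ pick a small ball on which $|\ln f(\xi|\theta')-\ln f(\xi|\theta)|$ is controlled by the dominator and then by continuity, extract a finite subcover, and combine the pointwise SLLN on the centers with the oscillation bound.

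Second, fix $\delta\in(0,\,(\e-\alpha)/2)$ and pick $N$ large enough (w.p.1) that the supremum above is $\le\delta$. For any $\theta\in V_\e$, $\psi(\theta)\le\psi(\theta^*)-\e$, so
$$
f(\bxi^{(N)}|\theta)\,p(\theta)\le c_1\,e^{N(\psi(\theta^*)-\e+\delta)}.
$$
For any $\theta\in U_\beta$, $\psi(\theta)>\psi(\theta^*)-\beta$, so
$$
\int_\Theta f(\bxi^{(N)}|\theta')\,p(\theta')\,d\theta'\ge\int_{U_\beta} f(\bxi^{(N)}|\theta')\,p(\theta')\,d\theta'\ge c_2\,\kappa(\beta)\,e^{N(\psi(\theta^*)-\beta-\delta)}.
$$
Dividing gives, for every $\theta\in V_\e$,
$$
p(\theta|\bxi^{(N)})\le \frac{c_1}{c_2\,\kappa(\beta)}\,e^{-N(\e-\beta-2\delta)}=\kappa(\beta)^{-1}e^{-N(\alpha-\beta)}\cdot\frac{c_1}{c_2}\,e^{-N(\e-\alpha-2\delta)}.
$$
Because $\e-\alpha-2\delta>0$, the trailing factor $\tfrac{c_1}{c_2}e^{-N(\e-\alpha-2\delta)}$ is $\le 1$ for all sufficiently large $N$, which yields \eqref{convest}.

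The main obstacle is the uniform SLLN in the first step, since consistency results for misspecified Bayesian models in the literature typically require constructing a testing sequence; here the compactness of $\Theta$ together with the continuity-plus-dominator hypothesis sidesteps that machinery cleanly. The remainder of the argument is exponential-rate bookkeeping and is routine once the uniform convergence is in hand; the constants $c_1,c_2$ and any $O(\delta)$ slack get absorbed because $\alpha<\e$ strictly.
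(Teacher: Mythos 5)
Your proof is correct and follows essentially the same route as the paper's: a uniform strong LLN for $N^{-1}\sum_{i=1}^N\ln f(\xi_i|\theta)$ over the compact set $\Theta$, the exponential gap of at least $\e$ between $V_\e$ and the maximizer of $\psi$, and a lower bound on the normalization obtained by restricting to $U_\beta$ with volume $\kappa(\beta)$, with the slack $\e-\alpha>0$ absorbing the constants $c_1/c_2$ and the $O(\delta)$ error. The only cosmetic difference is that you bound the Bayes numerator and the evidence integral separately, whereas the paper works with the log-ratio $\ln p(\theta^*|\bxi^{(N)})-\ln p(\theta|\bxi^{(N)})$ so that the evidence cancels and the normalization enters only through the bound $p(\theta^*|\bxi^{(N)})\le e^{N\beta}/\kappa(\beta)$; the bookkeeping is identical.
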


\begin{proof}
Define
\[
\phi_N(\theta):=N^{-1}\ln f(\bxi^{(N)}|\theta)=N^{-1}\sum_{i=1}^N \ln f(\xi_i|\theta).
\]
By the  Law of Large Number (LLN) we have for $\theta\in \Theta$ that
\begin{equation}\label{conv-1}
\lim_{N\to\infty}  \phi_N(\theta)=\psi(\theta), \; \rm w.p.1.
\end{equation}
Hence we can write
\begin{equation}\label{conv-2}
  N^{-1}\ln [f(\bxi^{(N)}|\theta) ]=\psi(\theta)+\varepsilon_N(\theta),
\end{equation}
where $\varepsilon_N(\theta)$  tends to 0 w.p.1 for any $\theta\in \Theta$.
Now for $\theta^*\in \Theta^*$ and $\theta\in V_\e$ we have
\begin{equation}\label{equat-1}
\ln p(\theta^*|\bxi^{(N)})-\ln p(\theta|\bxi^{(N)})=
\ln f(\bxi^{(N)}|\theta^*)-\ln f(\bxi^{(N)}|\theta)+\ln p(\theta^*)-\ln p(\theta).
\end{equation}
It  follows by \eqref{conv-2} that
\begin{equation}\label{ineq-1}
N^{-1}[ \ln p(\theta^*|\bxi^{(N)})-\ln p(\theta|\bxi^{(N)})]= \psi(\theta^*)-\psi(\theta) +
\varepsilon_N(\theta^*)-\varepsilon_N(\theta)+N^{-1}[\ln p(\theta^*)- \ln  p(\theta)].
\end{equation}
Consider a point $\theta\in V_\e$. Then
\begin{equation}\label{ineq-2}
N^{-1}[ \ln p(\theta^*|\bxi^{(N)})-\ln p(\theta|\bxi^{(N)})]\ge  \e+\gamma_N (\theta),
\end{equation}
where $\gamma_N (\theta)$ tends to zero w.p.1.
It follows  that for any $\alpha\in (0,\e)$,   w.p.1 for $N$ large enough
\begin{equation}\label{ineq-3}
 \ln p(\theta^*|\bxi^{(N)})-\ln p(\theta|\bxi^{(N)})\ge N \alpha,
\end{equation}
or equivalently
\begin{equation}\label{ineq-4}
e^{-N\alpha} p(\theta^*|\bxi^{(N)})\ge p(\theta|\bxi^{(N)}).
\end{equation}
In the similar way by using \eqref{ineq-1}, we obtain for $\theta\in U_{\e}$ and $\beta\in (0,\e)$ that  w.p.1 for $N$ large enough
\[
 \ln p(\theta^*|\bxi^{(N)})-\ln p(\theta|\bxi^{(N)})\le N \beta,
 \]
or equivalently
\begin{equation}\label{ineq-4a}
e^{-N\beta} p(\theta^*|\bxi^{(N)})\le p(\theta|\bxi^{(N)}).
\end{equation}
Now let us show that w.p.1 for $N$ large enough
\begin{equation}\label{bound-1}
 p(\theta^*|\bxi^{(N)})\le e^{N\beta}/\kappa(\beta).
\end{equation}
Indeed
    since $p(\theta|\bxi^{(N)})$ is a density we have
\[
1=\int_\Theta p(\theta|\bxi^{(N)}) d\theta\ge \int_{U_\beta} p(\theta|\bxi^{(N)}) d\theta
\ge e^{-N\beta}\kappa(\beta)p(\theta^*|\bxi^{(N)}),
\]
where for the last inequality  we used \eqref{ineq-4a} with $\kappa(\beta)=\int_{U_\beta} d\theta$.

By Assumption~\ref{assump} the set $\Theta$ is compact and $\ln f(\xi|\theta)$, $\theta\in \Theta$,  is dominated by an integrable (with respect to $Q_*$)  function.   Then by the uniform LLN (e.g., \cite[Theorem 7.48]{shapiro2021lectures})  the limit \eqref{conv-1} can be strengthened to the uniform limit
\begin{equation}\label{conu-1a}
\lim_{N\to\infty}\sup_{\theta\in \Theta} \left|  \phi_N(\theta)-\psi(\theta)\right|=0, \;{\rm w.p.1},
\end{equation}
i.e., $\varepsilon_N(\theta)=N^{-1}\ln [f(\bxi^{(N)}|\theta) ]-\psi(\theta)$ tends to 0 w.p.1 uniformly in $\theta\in \Theta$.
Assumption~\ref{assump} further supposes that
 $\ln p(\theta)$ is bounded on $\Theta$, i.e., there are constants $c_1>c_2>0$ such that $c_1\ge p(\theta)\ge c_2$ for all $\theta\in \Theta$.    Then
  \begin{equation}\label{ineq-1a}
N^{-1}[ \ln p(\theta^*|\bxi^{(N)})-\ln p(\theta|\bxi^{(N)})]= \psi(\theta^*)-\psi(\theta) +
 \eta_N(\theta),
\end{equation}
 where  $$\eta_N(\theta):=\varepsilon_N(\theta^*)-\varepsilon_N(\theta)+N^{-1}[\ln p(\theta^*)- \ln  p(\theta)]$$
  tends  to 0 w.p.1 uniformly in $\theta\in \Theta$.
 Thus for any $\alpha\in (0,\e)$ we have that w.p.1 for $N$ large enough
   \begin{equation}\label{ineq-1b}
 \ln p(\theta^*|\bxi^{(N)}) \ge N\alpha+ \sup_{\theta\in V_\e}\ln p(\theta|\bxi^{(N)}).
\end{equation}
By \eqref{bound-1} it follows that
   for $0<\beta<\alpha <\e$, w.p.1 for $N$ large enough
\begin{equation}\label{conu-2}
\sup_{\theta\in V_\e}  p(\theta|\bxi^{(N)})\le e^{-N\alpha} p(\theta^*|\bxi^{(N)})\le e^{-N(\alpha-\beta)} /\kappa(\beta).
\end{equation}
This completes the proof.
\end{proof}

Let $\theta_N$ be random vector with  the  posterior  pdf $p(\theta|\bxi^{(N)})$. We have that probability of the event $\{\theta_N\in V_\e\}$ is given by the integral $\int_{V_\e} p(\theta|\bxi^{(N)}) d\theta$.
Consequently  under Assumption \ref{assump},  we have by \eqref{convest} that for any $\e>0$, w.p.1 for $N$ large enough,
\begin{equation}\label{ineq-6}
{\rm Prob}\{\theta_N\in V_\e\}  \le    \kappa(\beta)^{-1} \nu e^{-N(\alpha-\beta)},
\end{equation}
where $\nu$ is   volume of the set $\Theta$.
It follows that   probability of the event $\{\theta_N\in U_\e\}$
converges w.p.1 to one as $N\to \infty$.
Note that for an appropriate $\e>0$,  the set $U_\e=\Theta\setminus V_\e$ can be an arbitrarily tight neighborhood of the set $\Theta^*$. Therefore by \eqref{ineq-6} we have the following result.

\begin{theorem}
Suppose that  Assumption~{\rm \ref{assump}} holds. Then with
 w.p.1 the distance from  $\hat{\theta}_N$  to the set $\Theta^*$  converges in probability to zero. In particular if $\Theta^*=\{\theta^*\}$ is the singleton, then for almost every sequence $\{\xi_1,...\}$,  we have that $\theta_N$ converges in probability to $\theta^*$.
 \end{theorem}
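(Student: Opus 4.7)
The statement is essentially a consequence of the preceding lemma together with the derived bound on $\mathrm{Prob}\{\theta_N\in V_\e\}$, so the plan is to show that the hypothesis ``$\theta_N$ is outside any given neighborhood of $\Theta^*$'' can be translated into ``$\theta_N\in V_\e$'' for some $\e>0$, and then invoke the exponential bound already at hand.

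First I would fix any open neighborhood $U^*$ of $\Theta^*$ in $\Theta$, and set $K:=\Theta\setminus U^*$. Since $\Theta$ is compact and $U^*$ is open, $K$ is compact, and since $\psi$ is continuous on $\Theta$ it attains its maximum on $K$ at some point $\tilde{\theta}$. By the definition of $\Theta^*$ as the set of maximizers of $\psi$ over $\Theta$ and the fact that $K\cap\Theta^*=\emptyset$, we must have $\psi(\tilde\theta)<\psi(\theta^*)$. Set $\e:=\psi(\theta^*)-\psi(\tilde\theta)>0$; then $\psi(\theta^*)-\psi(\theta)\ge\e$ for every $\theta\in K$, which gives $K\subset V_\e$ with $V_\e$ as defined in \eqref{vsets}.

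Next, pick any $0<\beta<\alpha<\e$. The bound established just before the theorem, namely
\begin{equation*}
\mathrm{Prob}\{\theta_N\in V_\e\}=\int_{V_\e}p(\theta|\bxi^{(N)})\,d\theta\le \nu\sup_{\theta\in V_\e}p(\theta|\bxi^{(N)})\le \kappa(\beta)^{-1}\nu\,e^{-N(\alpha-\beta)},
\end{equation*}
holds w.p.1 for $N$ large enough by the lemma. Combining this with $K\subset V_\e$ yields, w.p.1,
\begin{equation*}
\mathrm{Prob}\{\mathrm{dist}(\theta_N,\Theta^*)\ge \delta\}\le \mathrm{Prob}\{\theta_N\in K\}\le \mathrm{Prob}\{\theta_N\in V_\e\}\longrightarrow 0\quad\text{as }N\to\infty,
\end{equation*}
where I have taken $U^*$ to be the $\delta$-enlargement of $\Theta^*$. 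Since $\delta>0$ was arbitrary, this is exactly the statement that $\mathrm{dist}(\theta_N,\Theta^*)$ converges to zero in probability, w.p.1.  The specialization to $\Theta^*=\{\theta^*\}$ is immediate because then $\mathrm{dist}(\theta_N,\Theta^*)=\|\theta_N-\theta^*\|$.

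The only genuine step beyond bookkeeping is the translation of a neighborhood of $\Theta^*$ into a set of the form $V_\e$, which is where compactness of $\Theta$ and continuity of $\psi$ are essential; everything else is a direct application of the already-proved exponential bound. I do not foresee a real obstacle since the lemma already does the hardest work (uniform LLN together with the normalization inequality~$\eqref{bound-1}$).
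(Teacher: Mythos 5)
Your argument is correct and follows essentially the same route as the paper: the theorem is obtained directly from the exponential bound $\mathrm{Prob}\{\theta_N\in V_\e\}\le \kappa(\beta)^{-1}\nu e^{-N(\alpha-\beta)}$ together with the observation that $U_\e$ shrinks to a neighborhood of $\Theta^*$. The only difference is that you spell out, via compactness of $\Theta\setminus U^*$ and continuity of $\psi$, why every neighborhood of $\Theta^*$ contains some $U_\e$ --- a step the paper asserts without detail --- which is a worthwhile clarification but not a different proof.
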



\begin{remark}
{\rm
Convergence of Bayesian posterior distributions has been studied for a long time, dating back to Doob's consistency \cite{doob1948application}. We refer the reader to \cite{Ghosal2000ConvergenceRO} for a nice overview of Bayesian consistency results. Our analysis here resembles the proof and result of Schwartz consistency \cite{Schwartz1965OnBP}, but we do not require the assumption of the existence of a testing sequence, which is a common assumption in many of Bayesian consistency results (e.g., \cite{Schwartz1965OnBP,Ghosal2000ConvergenceRO,Kleijn2006MisspecificationII,Gao2020Posterior}) but usually hard to verify in practice. Instead we impose simpler and maybe stronger assumptions (see Assumption~\ref{assump}). These assumptions are easy to verify and sufficient for our problems.}
\end{remark}

\subsection{Consistency of Bayesian optimization problems}
As in the previous section by writing w.p.1   we mean this with respect to the probability measure $Q_*^\infty$.
Consider a function $H:\X\times \Theta\to \bbr$ and the corresponding optimization problem
\begin{equation}\label{optimiz-1}
  \min_{x\in \X} \left\{ \bbe_{\theta_N} [H_x]=\int_\Theta H(x,\theta)
  p(\theta|\bxi^{(N)})d\theta\right\}.
\end{equation}

In this section we discuss convergence of the optimal value and the set of optimal solutions of the above problem as $N\to\infty$. In the considered applications the function $H(x,\theta)$ is given by
\begin{equation}\label{funch}
\begin{array}{l}
H(x,\theta):=\bbe_{\xi|\theta}[G(x,\xi)]\;\;{\rm and}\;\;
H(x,\theta):= \sup_{Q\in\cM^\theta}  \bbe_{Q|\theta}[G(x,\xi)]
\end{array}
\end{equation}
in the cases of the risk-neutral  Bayesian  problem \eqref{stat-3} and  the Bayesian-DRO problem \eqref{BDRO}, respectively.
Note that in both cases, the function $H(x,\theta)$ is convex in $x$ if $G(x,\xi)$ is convex in $x$.

Let us discuss convergence of random variables $H_x(\theta_N)=H(x,\theta_N)$,  $\theta_N \sim p(\cdot|\bxi^{(N)})$.

\begin{lemma}\label{lem-weak-convg}
Suppose that  Assumption~\ref{assump} holds and   $\Theta^* = \{\theta^*\}$ is the singleton.  Then for any upper semi-continuous\footnote{Recall that function $h(\theta)$ is said to be
upper semi-continuous if $h(\theta)\ge \limsup_{\theta'\to \theta}h(\theta')$ for $\theta\in \Theta$. Of course, any continuous function is upper semi-continuous.}
function
 $h:\Theta\to\bbr$  it follows that
\begin{equation}\label{limcont}
\lim_{N\to\infty}\int_{\Theta} h(\theta)p(\theta|\bxi^{(N)})d\theta = h(\theta^*), \; {\rm  w.p.1}.
\end{equation}
\end{lemma}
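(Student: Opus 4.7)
The plan is to combine the exponential concentration of the posterior around $\theta^*$, already quantified by \eqref{ineq-6}, with the upper semi-continuity of $h$ to pin down the limit of the integral. Since $h$ is USC on the compact set $\Theta$, it attains a finite supremum; I will further assume $h$ is bounded overall (which is automatic in the applications \eqref{funch}, where $H(x,\cdot)$ is continuous on the compact $\Theta$), and write $M := \sup_{\theta \in \Theta} |h(\theta)|$.

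The key preliminary observation is that $\psi$ is continuous on the compact $\Theta$ with unique maximizer $\theta^*$, which implies by a standard compactness argument that $\{U_\e\}_{\e > 0}$ forms a local neighborhood base at $\theta^*$: for every open $W \ni \theta^*$ there exists $\e > 0$ with $U_\e \subseteq W$. Coupled with the USC inequality $h(\theta^*) \ge \limsup_{\theta' \to \theta^*} h(\theta')$, this lets me choose, for any given $\delta > 0$, an $\e > 0$ small enough that $h(\theta) \le h(\theta^*) + \delta$ throughout $U_\e$.

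I would then decompose
\begin{equation*}
\int_\Theta h(\theta) \, p(\theta|\bxi^{(N)}) \, d\theta = \int_{U_\e} h(\theta) \, p(\theta|\bxi^{(N)}) \, d\theta + \int_{V_\e} h(\theta) \, p(\theta|\bxi^{(N)}) \, d\theta
\end{equation*}
and control the two pieces separately. The first integral is bounded above by $(h(\theta^*) + \delta) \int_{U_\e} p(\theta|\bxi^{(N)}) \, d\theta$, and since the posterior mass on $V_\e$ vanishes w.p.1 by \eqref{ineq-6}, the mass on $U_\e$ tends to $1$, so this piece has $\limsup \le h(\theta^*) + \delta$. The second integral is dominated in absolute value by $M \int_{V_\e} p(\theta|\bxi^{(N)}) \, d\theta$, which tends to $0$ at the exponential rate of \eqref{ineq-6}. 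Sending $\delta \downarrow 0$ gives $\limsup_N \int_\Theta h(\theta) p(\theta|\bxi^{(N)}) \, d\theta \le h(\theta^*)$ w.p.1.

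The principal obstacle is the matching $\liminf$ inequality required for full equality. Upper semi-continuity alone does not prevent $h$ from dipping arbitrarily far below $h(\theta^*)$ on shrinking neighborhoods (e.g., $h = \ind_{\{\theta^*\}}$ is USC yet integrates to $0$ against any density), so $\liminf \ge h(\theta^*)$ is genuinely not available at this level of generality. In the intended applications to $H(x,\cdot)$ defined in \eqref{funch}, however, the integrand is continuous (hence also lower semi-continuous) in $\theta$ at $\theta^*$, and running the symmetric argument with $h(\theta) \ge h(\theta^*) - \delta$ on $U_\e$ provides the matching lower bound and closes the estimate to the claimed equality.
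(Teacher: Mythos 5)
Your decomposition over $U_\e$ and $V_\e$, with the posterior mass on $V_\e$ killed by the exponential bound and the $U_\e$ piece controlled by upper semi-continuity, is exactly the route the paper takes; the paper's version packages your $\delta$ as $\gamma_\e:=\sup_{\theta\in U_\e}h(\theta)-h(\theta^*)$ and your $M$ as $\lambda:=\sup_{\theta\in\Theta}\{h(\theta)-h(\theta^*)\}$. So as far as the $\limsup\le h(\theta^*)$ half goes, your argument is correct and essentially identical.

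The ``principal obstacle'' you flag is not a defect of your write-up but a genuine defect of the lemma as stated: your example $h=\ind_{\{\theta^*\}}$ is USC, integrates to $0$ against every posterior density, and yet $h(\theta^*)=1$, so the claimed equality \eqref{limcont} fails. The paper's own proof hides this by asserting $\bigl|\int_{U_\e}(h(\theta)-h(\theta^*))p(\theta|\bxi^{(N)})d\theta\bigr|\le \gamma_\e\int_{U_\e}p(\theta|\bxi^{(N)})d\theta$, which is only a one-sided bound: USC gives no control on how far $h$ dips below $h(\theta^*)$ on $U_\e$ (nor is a real-valued USC function on a compact set necessarily bounded below, which is also needed for the $V_\e$ term). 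Your diagnosis and your fix --- adding boundedness and lower semi-continuity (i.e., continuity) of $h$ at $\theta^*$, which hold for the risk-neutral $H(x,\cdot)$ and for the continuous functions to which the lemma is applied in the uniform LLN theorem --- are the right repair; note, though, that in the DRO setting the paper only establishes that $H_x(\cdot)$ is USC (an infimum of continuous functions), so the application of the equality \eqref{limcont} there would also need the continuity argument the paper sketches later for $H(x,\theta)$ in \eqref{uppersem-2}.
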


\begin{proof}
Let  $\e>0$  and consider $\gamma_\e:=\sup_{\theta\in U_\e} h(\theta)-h(\theta^*)$.
By the definition \eqref{vsets} we have that $V_\e\cup U_\e=\Theta$.  Note that  since $\theta^*\in U_\e$,  we have that $\gamma_\e\ge 0$. Note also that since function $h(\theta)$ is upper semi-continuous, it attains its maximum over $\theta\in \Theta$, and thus the constant
 $$\lambda:=\sup_{\theta\in \Theta} \{h(\theta)-h(\theta^*)\}$$ is finite (and non-negative).
Then
  we can write
\begin{eqnarray*}
 \left |  \int_{\Theta}h(\theta)p(\theta|\bxi^{(N)})d\theta-h(\theta^*)\right|
 &=&
 \left |  \int_{\Theta}h(\theta)p(\theta|\bxi^{(N)})d\theta-h(\theta^*)
\int_{\Theta} p(\theta|\bxi^{(N)})d\theta
  \right|\\
 &=& \left |  \int_{U_{\e}}\big(h(\theta)-h(\theta^*)\big)p(\theta|\bxi^{(N)})d\theta
+
\int_{V_e} \big(h(\theta)-h(\theta^*) p(\theta|\bxi^{(N)})d\theta\right|\\
 \\
&\le & \gamma_\e  \int_{U_{\epsilon}}p(\theta|\bxi^{(N)})d\theta  + \lambda
 \int_{V_\e} p(\theta|\bxi^{(N)})d\theta
 \\
 &\le & \gamma_\e  + \lambda
 \int_{V_\e} p(\theta|\bxi^{(N)})d\theta.
 \end{eqnarray*}
 By \eqref{convest}  the term $\int_{V_\e} p(\theta|\bxi^{(N)})d\theta$ can be arbitrarily small   w.p.1  for $N$  large enough.   Since
 $h(\cdot)$ is upper semi-continuous
 and   $U_\e$ shrinks to   $\{\theta^*\}$ as $\e\downarrow 0$, we have that
 $
 \limsup_{\e\downarrow 0} \gamma_\e\le 0.
 $
 Because  $\gamma_\e\ge 0$, it follows that
  $\gamma_\e$ tends to zero as $\e\downarrow 0$.
Consequently the assertion \eqref{limcont} follows.
\end{proof}

In both settings  of \eqref{funch}  it can be verified under standard regularity conditions that $H_x(\cdot)$ is upper semi-continuous  on  $\Theta$. Indeed,  in the risk-neutral case we have
\begin{equation}\label{uppersem-1}
   \lim_{\theta'\to\theta} H_x(\theta')=\lim_{\theta'\to\theta}\int  G_x(\xi) f(\xi|\theta')d\xi=
\int \lim_{\theta'\to\theta} G_x(\xi) f(\xi|\theta')d\xi=H_x(\theta),
\end{equation}
i.e., $H_x(\cdot)$ is  continuous,
provided that $f(\xi|\theta)$  is continuous in $\theta\in \Theta$  and the limit and integral can be interchanged (this can be ensured by the respective dominance condition). In the DRO setting of KL-divergence approach, we have that
  \begin{equation}\label{uppersem-2}
H_x(\theta)=\inf_{\lambda > 0}\left\{
 \lambda \e  +\lambda\ln \bbe_{\xi|\theta}  [ e^{
 G_x/\lambda} ] \right\}.
  \end{equation}
The above function is finite valued  by assumption \eqref{assum}.
Since infimum of a family of continuous functions is  upper semi-continuous, it follows that the above $H_x(\cdot)$ is upper semi-continuous provided that $\bbe_{\xi|\theta}  [ e^{
 G_x/\lambda} ] $ is continuous in $\theta$.

For  $x\in \X$  suppose that $H_x(\cdot)$ is upper semi-continuous on $\Theta$.
Then under the assumptions of Lemma \ref{lem-weak-convg} we have by \eqref{limcont} that
\begin{equation}\label{limcon-3}
\lim_{N\to\infty}\bbe_{\theta_N}[H_x] = H(x,\theta^*), \; {\rm  w.p.1}.
\end{equation}
The above can be viewed as a point-wise LLN for random variables $H_x(\theta_N)$.  Under mild additional assumptions this point-wise LLN can be extended (we will discuss this below) to the respective uniform LLN:
 \begin{equation}\label{uniform}
 \lim_{N\to\infty}\sup_{x\in \X}\big|\bbe_{\theta_N}[H_x] - H(x,\theta^*)\big|=0, \; {\rm  w.p.1}.
 \end{equation}

Now consider the limiting optimization problem
\begin{equation}\label{optlimit}
  \min_{x\in \X} H(x,\theta^*).
\end{equation}

Denote by  $\vv_N$ and $\vv^*$ the optimal value of the respective problems \eqref{optimiz-1} and \eqref{optlimit}, and the corresponding sets
\[
\s_N:=\argmin_{x\in \X}\bbe_{\theta_N} [H_x]\;\;{\rm and}\;\;\s^*:=\argmin_{x\in \X} H(x,\theta^*)
\]
of optimal solutions. Suppose that the optimal value $\vv^*$ of problem \eqref{optlimit} is finite. Then the uniform LLN \eqref{uniform} implies that (e.g.,  \cite[Proposition 5.2]{shapiro2021lectures})
\begin{equation}\label{valconv}
\lim_{N\to\infty} \vv_N =\vv^*\;{\rm w.p.1}.
\end{equation}

Under mild additional conditions, it is possible to show that the uniform LLN implies that\footnote{By $\bbd(A,B)$ we denote the deviation of set $A\subset \bbr^n$ from set $B\subset \bbr^n$, that is $\bbd(A,B):=\sup_{x\in A}\dist (x,B),$ with $\dist (x,B)=\sup_{y\in B}\|x-y\|$.}
\begin{equation}\label{optconv}
\lim_{N\to\infty} \bbd(\s_N,\s^*) =0, \;{\rm w.p.1}
\end{equation}
(see, e.g., \cite[Theorems 5.3 and 5.4]{shapiro2021lectures}). This means that  if $x_N$ is an optimal solution of  problem \eqref{optimiz-1}, then the distance from $x_N$ to $\s^*$ tends to zero  w.p.1. In particular, if $\s^*=\{x^*\}$ is the singleton, then $x_N$ converges to $x^*$ w.p.1. \\

Let us discuss now the  uniform LLN \eqref{uniform}. It is relatively easy to derive the uniform LLN in the following convex case.
\begin{assu} \label{assum-2}
Suppose that the set $\X$ is compact and there is a convex  neighborhood\,\footnote{By the ``neighborhood" we mean that the set $\V$ is open and $\X\subset \V$.} $\V$ of $\X$ such that function $H(\cdot,\theta)$ is finite valued convex on $\V$ for every $\theta\in \Theta$.
\end{assu}

Convexity of $H(\cdot,\theta)$ implies convexity  of the expectation function $\int_\Theta H(\cdot,\theta)p(\theta|\bxi^{(N)})d\theta$. It is known by convex analysis that an extended real valued  convex function is continuous on the interior of its domain. Moreover,  if $f_k:\bbr^n\to \overline{\bbr}$ is a sequence of convex functions and $f:\bbr^n\to \overline{\bbr}$ is a convex function such that its domain has a nonempty interior, and $f_k(x)$ converges to $f(x)$ for all $x$ in a dense subset of $\bbr^n$, then $f_k(\cdot)$ converges uniformly  to $f(\cdot)$ on every compact subset of $\bbr^n$ which does not contain a boundary point of the domain of $f$  (e.g., \cite[Theorem 7.17]{rockafellar2009variational}). By using this result it is not difficult to derive the following uniform LLN (e.g., \cite[Theorem 7.50]{shapiro2021lectures}).

\begin{proposition}
\label{pr-conv}
Suppose that Assumption \ref{assum-2} is fulfilled and   the point-wise LLN \eqref{limcon-3} holds for every $x\in \V$. Then the uniform LLN \eqref{uniform} follows.
\end{proposition}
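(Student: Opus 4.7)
The plan is to invoke the classical fact that pointwise convergence of convex functions on a dense set upgrades to uniform convergence on compact subsets of the interior of their common domain of finiteness, which is exactly \cite[Theorem 7.17]{rockafellar2009variational} cited in the preceding paragraph. Concretely, define the random convex functions
\[
F_N(x):=\bbe_{\theta_N}[H_x]=\int_\Theta H(x,\theta)p(\theta|\bxi^{(N)})d\theta,\qquad F(x):=H(x,\theta^*),
\]
both viewed on the convex open set $\V\supset \X$. Under Assumption~\ref{assum-2}, $H(\cdot,\theta)$ is finite-valued and convex on $\V$ for every $\theta$, so $F_N(\cdot)$ is finite-valued and convex on $\V$ (convexity is preserved under integration against the posterior), and $F(\cdot)$ is finite-valued and convex on $\V$ as well.

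The next step is to pass from the pointwise LLN \eqref{limcon-3}, which holds w.p.1 at each individual $x\in \V$, to simultaneous pointwise convergence on a countable dense subset of $\V$. Choose a countable set $D\subset\V$ that is dense in $\V$; for each $x\in D$ there is a $Q_*^\infty$-null set $N_x$ off which $F_N(x)\to F(x)$. Let $N_0:=\bigcup_{x\in D}N_x$, which is still $Q_*^\infty$-null. For every sample point outside $N_0$, we have simultaneous convergence $F_N(x)\to F(x)$ for all $x\in D$.

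Now I fix such a sample point. Since $F_N$ and $F$ are finite-valued convex functions on $\V$ (so the effective domain has nonempty interior containing $\V$), and since $F_N\to F$ on the dense set $D\subset \V$, Rockafellar-Wets \cite[Theorem 7.17]{rockafellar2009variational} yields uniform convergence of $F_N$ to $F$ on every compact subset of $\V$ that does not touch the boundary of $\dom F$. As $\X$ is compact and contained in the open set $\V\subset \intt(\dom F)$, $\X$ is precisely such a compact subset, and thus
\[
\sup_{x\in\X}|F_N(x)-F(x)|\longrightarrow 0
\]
for every sample point outside $N_0$, establishing \eqref{uniform}.

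The only delicate point is the interchange between ``w.p.1 for each $x$'' and ``w.p.1 simultaneously,'' which is resolved by the countable-dense trick above and uses only the separability of $\V$. The convex-analytic uniformization is automatic once pointwise convergence on a dense set is secured, so no further regularity on $H$ beyond that supplied by Assumption~\ref{assum-2} is needed.
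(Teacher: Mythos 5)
Your proof is correct and follows essentially the same route the paper takes: the paper leaves the proposition unproved beyond the preceding paragraph, which sketches exactly this argument (convexity of the integrated function plus \cite[Theorem 7.17]{rockafellar2009variational} applied on a compact subset of the interior of the domain). Your write-up supplies the standard countable-dense-subset step needed to pass from ``w.p.1 for each $x$'' to ``w.p.1 simultaneously,'' which the paper leaves implicit.
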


Without the convexity assumption we need to impose additional conditions. The following is similar to a derivation of the uniform LLN in the standard case  (e.g., \cite[Theorem 7.48]{shapiro2021lectures}).

\begin{theorem}
Suppose that  Assumption~\ref{assump} holds, the set   $\Theta^* = \{\theta^*\}$ is the singleton, the set  $\X$ is compact, and the function $H(x,\theta)$ is continuous on $\X\times \Theta$.
Then the uniform LLN \eqref{uniform} follows.
\end{theorem}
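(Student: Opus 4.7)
The plan is to combine the concentration of the posterior around $\theta^*$ (established in the previous lemma / theorem) with the uniform continuity of $H$ on the compact product set $\X\times\Theta$, which upgrades the pointwise limit in \eqref{limcon-3} to a uniform one in $x$.

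First I would record two consequences of the standing hypotheses. Since $\X\times\Theta$ is compact and $H$ is continuous, $H$ is bounded by some constant $M$ on $\X\times\Theta$ and uniformly continuous there; so for every $\delta>0$ there exists $\eta>0$ such that $|H(x,\theta)-H(x,\theta^*)|<\delta$ whenever $x\in\X$ and $\|\theta-\theta^*\|<\eta$. Next, because $\psi$ is continuous on the compact set $\Theta$ with unique maximizer $\theta^*$, the neighborhoods $U_\e=\{\theta:\psi(\theta^*)-\psi(\theta)<\e\}$ shrink to $\{\theta^*\}$ as $\e\downarrow 0$; therefore, given the $\eta$ above, I can choose $\e>0$ small enough that $U_\e\subset\{\theta:\|\theta-\theta^*\|<\eta\}$.

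Then I would split the integral against $p(\theta|\bxi^{(N)})$ over $U_\e$ and $V_\e=\Theta\setminus U_\e$. Since $\int_\Theta p(\theta|\bxi^{(N)})d\theta=1$, for every $x\in\X$,
\[
\bigl|\bbe_{\theta_N}[H_x]-H(x,\theta^*)\bigr|
\le \int_{U_\e}|H(x,\theta)-H(x,\theta^*)|\,p(\theta|\bxi^{(N)})d\theta
+\int_{V_\e}|H(x,\theta)-H(x,\theta^*)|\,p(\theta|\bxi^{(N)})d\theta.
\]
The first integral is at most $\delta$ by the choice of $\e$ and the uniform continuity bound; the second integral is at most $2M\int_{V_\e}p(\theta|\bxi^{(N)})d\theta$. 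Both bounds are independent of $x$, so
\[
\sup_{x\in\X}\bigl|\bbe_{\theta_N}[H_x]-H(x,\theta^*)\bigr|\le \delta+2M\int_{V_\e}p(\theta|\bxi^{(N)})d\theta.
\]

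To finish, I invoke the earlier estimate \eqref{convest} (or the equivalent statement \eqref{ineq-6}): under Assumption~\ref{assump}, $\int_{V_\e}p(\theta|\bxi^{(N)})d\theta\to 0$ w.p.1. Hence w.p.1,
\[
\limsup_{N\to\infty}\sup_{x\in\X}\bigl|\bbe_{\theta_N}[H_x]-H(x,\theta^*)\bigr|\le \delta,
\]
and since $\delta>0$ is arbitrary the uniform LLN \eqref{uniform} follows. The only nontrivial step is choosing the two scales: $\eta$ (for uniform continuity) and $\e$ (so that $U_\e\subset\{\|\theta-\theta^*\|<\eta\}$); once these are coupled correctly, the boundedness of $H$ handles the tail on $V_\e$ uniformly in $x$, which is the reason the pointwise convergence can be promoted to uniform convergence without any further regularity.
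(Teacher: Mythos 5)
Your proof is correct, but it takes a genuinely different (and more direct) route than the paper. The paper follows the standard uniform-LLN template: for each fixed $\bar{x}\in\X$ it introduces the local oscillation $\Delta_k(\theta)=\sup_{x:\|x-\bar x\|\le\nu_k}|H(x,\theta)-H(\bar x,\theta)|$, applies Lemma~\ref{lem-weak-convg} to the continuous function $\Delta_k$ to control $\bbe_{\theta_N}[\Delta_k]$, and then finishes with a finite-subcover argument over the compact set $\X$ (as in Theorem~7.48 of the cited lecture notes). You instead exploit the specific structure of the limit --- a point mass at $\theta^*$ --- by using joint uniform continuity of $H$ on the compact product $\X\times\Theta$ to make the $U_\e$/$V_\e$ decomposition of \eqref{vsets} uniform in $x$ in a single step: the $U_\e$ piece is controlled by the modulus of continuity (after choosing $\e$ so that $U_\e$ lies in an $\eta$-ball around $\theta^*$, which is legitimate since $\psi$ is continuous on the compact $\Theta$ with unique maximizer), and the $V_\e$ piece is controlled by boundedness of $H$ together with the posterior concentration \eqref{convest}/\eqref{ineq-6}. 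In effect you rerun the proof of Lemma~\ref{lem-weak-convg} with the extra uniformity in $x$ supplied by uniform continuity, which avoids both the auxiliary oscillation functions and the covering argument; the only cosmetic point is that to get a single null set you should let $\delta$ run over a countable sequence tending to zero. What the paper's covering argument buys is conformity with the general SAA machinery (and it isolates the role of Lemma~\ref{lem-weak-convg} as the single probabilistic input); what your argument buys is brevity and transparency, at the cost of being tied to the fact that the posterior concentrates at a single point --- which is exactly the hypothesis $\Theta^*=\{\theta^*\}$ assumed here, so nothing is lost.
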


\begin{proof}
For a point $\bar{x}\in \X$, a sequence $\nu_k$ of positive numbers converging to zero and $\V_k:=\{x\in \X:\|x-\bar{x}\|\le \nu_k\}$, consider
\[
\Delta_k(\theta):=\sup_{x\in \V_k}|H(x,\theta)-H(\bar{x},\theta)|,\;\theta\in \Theta.
\]
Since $H(x,\theta)$ is continuous on $\X\times \Theta$ and $\X$ is compact, it follows that $\Delta_k(\cdot)$ is continuous on $\Theta$. Then by Lemma \ref{lem-weak-convg} we have that
\begin{equation}\label{limit-1a}
\lim_{N\to\infty}\bbe_{\theta_N} [\Delta_k]= \Delta_k(\theta^*), \; {\rm  w.p.1}.
\end{equation}
By continuity of $H(\cdot,\theta^*)$, we have that $\Delta_k(\theta^*)$ tends to zero as $k\to\infty$. We also have by Lemma \ref{lem-weak-convg}   that
\begin{equation}\label{limit-2a}
\lim_{N\to\infty}\bbe_{\theta_N} [H_{\bar{x}}]= H(\bar{x},\theta^*), \; {\rm  w.p.1}.
\end{equation}
Furthermore for $x\in \V_k$,
\begin{eqnarray*}
\big| \bbe_{\theta_N}[H_x]-\bbe_{\theta_N}[H_{\bar{x}}]\big|
& \leq & \big| \bbe_{\theta_N}[H_x]- H(\bar{x},\theta^*)\big|+
   \big| \bbe_{\theta_N}[H_{\bar{x}}]- H(\bar{x},\theta^*)\big|\\
 & \leq & \bbe_{\theta_N}[\Delta_k]+
   \big| \bbe_{\theta_N}[H_{\bar{x}}]- H(\bar{x},\theta^*)\big|.
\end{eqnarray*}
It follows that for a given $\epsilon>0$ there is a neighborhood $\W$ of $\bar{x}$ such that w.p.1 for $N$ large enough
\begin{equation}\label{limit-3a}
   \sup_{x\in \X\cap \W} \big| \bbe_{\theta_N}[H_x]-\bbe_{\theta_N}[H_{\bar{x}}]\big|\le \epsilon.
\end{equation}
The proof can be completed now exactly in the same way as in the proof of Theorem 7.48 in \cite{shapiro2021lectures} by using compactness of the set $\X$.
\end{proof}

The assumed continuity of $H(x,\theta)$  on $\X\times \Theta$ can be verified under mild regularity conditions. That is, assume that $G(x,\xi)$ is continuous in $x\in \X$,  $f(\xi|\theta)$  is continuous in $\theta\in \Theta$  and $G_x(\xi) f_\theta(\xi)$, $(x,\theta)\in  \X\times \Theta$,  is dominated by an integrable function. Then in the risk neutral case the  continuity of   $H(x,\theta)$ can be verified  similar to \eqref{uppersem-1}. In the DRO setting, with $H(x,\theta)$ given in \eqref{uppersem-2}, the continuity of   $H(x,\theta)$ also follows since the objective function in the right hand side minimization of problem \eqref{uppersem-2} is strictly convex in $\lambda>0$, and thus  the corresponding  minimizer is unique. By convexity of the objective function, this minimizer is a continuous function of $(x,\theta)\in \X\times \Theta$. Therefore, for $(x,\theta)$ in a neighborhood of a considered point the minimization can be restricted to a bounded (compact) subset of $\bbr_+$, and hence the continuity at the considered point follows.

\subsection{Determination of the ambiguity set size}
\label{sec:size}

We consider how to determine the ambiguity set size $\e$ in the Bayesian-DRO problem \eqref{dro-nest}. Recall that  $Q_*$ denotes the true distribution of $\xi$ with $q_*$ denoting its pdf, and  $\mu:=\bbe_{\xi|\theta}[Z]$ , $\sigma^2:=\var_{\xi|\theta}(Z)$ for   $Z:\Xi\to\bbr$.  The true objective function can be written as
\begin{align*}
\bbe_{Q_*}[Z] &= \mu + \bbe_{\xi|\theta}\left[Z(\xi)\frac{q_*(\xi)-f(\xi|\theta)}{f(\xi|\theta)} \right] \\
&= \mu + \bbe_{\xi|\theta}\left[(Z(\xi)-\mu)\frac{q_*(\xi)-f(\xi|\theta)}{f(\xi|\theta)} \right],
\end{align*}
where the second equality uses the fact $\bbe_{\xi|\theta}\left[\frac{q_*(\xi)-f(\xi|\theta)}{f(\xi|\theta)} \right] = 0$.
Applying Cauchy-Schwartz inequality to the right hand side of the equation above, we have
\begin{align*}
\bbe_{Q_*}[Z] \leq \mu + \sigma \bbe_{\xi|\theta}\left[\left(\frac{q_*(\xi)-f(\xi|\theta)}{f(\xi|\theta)}\right)^2 \right]^{1/2},
\end{align*}
where the last term can be simplified as
$$
\bbe_{\xi|\theta} \left[\left(\frac{q_*(\xi)-f(\xi|\theta)}{f(\xi|\theta)}\right)^2 \right] = \bbe_{Q_*}\left[\frac{q_*(\xi)}{f(\xi|\theta)}\right]-1.
$$

If we let $2\e = \bbe_{Q_*}\left[\frac{q_*(\xi)}{f(\xi|\theta)}\right]-1$, then by \eqref{appr-2} we have
\begin{equation}\label{upper-bound}
\bbe_{Q_*}[Z] \leq \mu + \sigma\sqrt{2\e } \approx \min_{\lambda > 0}\left\{
 \lambda \e  +\lambda\ln \bbe_{\xi|\theta} \big[ e^{Z/\lambda}\big]\right\},
\end{equation}
which implies the  objective value of the Bayesian-DRO problem \eqref{dro-nest} is an upper bound on the true objective value. Note here $\e$ depends on $\theta$.

A plausible idea of choosing the ambiguity set size is to make sure the ambiguity set contains the true distribution. That is, we would set
$$
\epsilon(\theta) = D_{KL}(q_*\|f_\theta).
$$

When $q_*$ is close to $f_\theta$, we can write $D_{KL}(q_*\|f_\theta)\approx \bbe_{Q_*}\left[\frac{q_*(\xi)}{f(\xi|\theta)}\right]-1$. However, \eqref{upper-bound} shows even choosing $\epsilon$ half of the size, i.e. $\epsilon = \left(\bbe_{Q_*}\left[\frac{q_*(\xi)}{f(\xi|\theta)}\right]-1\right)/2$, the Bayesian-DRO objective is still an upper bound on the true objective, which indicates this choice of ambiguity set size might be too conservative. Moreover, since $q_*$ is unknown and has to be replaced by a continuous approximation of its empirical distribution, the number of samples required to achieve a certain approximation accuracy grows exponentially in dimension, which makes this method impractical in high dimension.

Now we consider a different method, which is inspired by \cite{Blanchet:2019Wasserstein}. We choose the ambiguity set to be the minimum KL ball containing at least one distribution under which the corresponding problem has the same optimal solution as the true problem. More specifically, we define a set of distributions as
$$
\mathcal{Q}(x^*) := \{Q: x^*\in\argmin_x\bbe_{Q}[G(x,\xi)]\},
$$
where $x^*$ is an optimal solution to the true problem.
When $G(x,\xi)$ is convex in $x$ and $x^*$ is an interior point of $\X$, we can simplify by the first-order optimality condition,
$$
\mathcal{Q}(x^*) = \{Q: \bbe_{Q}[\nabla_x G(x^*,\xi)]=0\}.
$$

In general, we can represent the condition in $\mathcal{Q}(x^*)$ by KKT conditions.
Clearly, $Q_*\in \mathcal{Q}(x^*)$, i.e., the true distribution falls in the set $\mathcal{Q}(x^*)$. Now we set the ambiguity set size by
minimizing the KL divergence from $\mathcal{Q}(x^*)$ to $f_{\theta}$:
\begin{equation} \label{eqn-ambiguityKLball}
\hat{\epsilon}(\theta) = \min_{q\in\mathcal{Q}(x^*)} D_{KL}(q||f_\theta).
\end{equation}

We do not know the optimal solution $x^*$, so in implementation we can replace $x^*$ by the empirical optimal solution $\hat{x}_N$, which is the optimal solution to the SAA problem
$\min_{x\in\mathcal{X}}\bbe_{\hat{Q}_N}[G(x,\xi)]$, where $\hat{Q}_N$ is the empirical distribution of the data $\bxi^{(N)}$. Since $\hat{x}_N - x^* = O_p(N^{-1/2})$ under certain regularity conditions, in particular if the true optimal $x^*$ is unique (see Section 5.1 of \cite{shapiro2021lectures}), and under mild conditions $\epsilon(\theta, x) = \min_{q\in\mathcal{Q}(x)} D_{KL}(q||f_\theta)$ is a smooth function in $x$, one can expect that  $\epsilon(\theta, \hat{x}_N)$ is a good approximation of $\hat{\epsilon}(\theta)$.

\setcounter{equation}{0}
\section{Numerical Experiments}
\label{sec-numeric}
In this section, we demonstrate the performance of Bayesian-DRO on problems of one-dimension and multi-dimension with randomness having continuous and finite support respectively. The Bayesian-DRO problem \eqref{dro-nest} is restated as follows:
\begin{align}
    \min _{x \in \mathcal{X}} \mathbb{E}_{\theta_{N}}\left[\inf _{\lambda>0}\left\{\lambda \epsilon+\lambda \ln \mathbb{E}_{\xi \mid \theta}\left[e^{G_{x} / \lambda}\right]\right\}\right],
\label{eq: numerical_BayesianDRO}
\end{align}
where $N$ is the number of data points, $G_x$ stands for the cost function $G(x,\xi)$. In implementation, we apply SAA (e.g., \cite{shapiro2021lectures}) to solve problem \eqref{eq: numerical_BayesianDRO}. We generate 100 samples of $\theta$ from the posterior distribution $p\left(\theta \mid \boldsymbol{\xi}^{(N)}\right)$ and 100 samples of $\xi$ from the reference distribution $f(\xi | \theta)$ conditioned on each sampled $\theta$. We compare the following approaches.

\begin{enumerate}
    \item[(1)] Bayesian-DRO, with pre-specified ambiguity set size $\epsilon$, which varies in a certain range.

    \item[(2)] Bayesian-DRO, with ambiguity set size $\epsilon_{1}(\theta)=D_{K L}\left(q_{*} \| f(\cdot ; \theta)\right)$, where the unknown true distribution $q_{*}$ is estimated by the empirical distribution of the data.
    \item[(3)] Bayesian-DRO, with ambiguity set size $\epsilon_2(\theta)=\frac{\epsilon_1(\theta)}{2}$.  It halves $\e_1$ to reduce the over-estimation, as shown in Section~\ref{sec:size}.
    \item[(4)] Bayesian-DRO, with ambiguity set size $\epsilon_3(\theta)$, 
    that is, solving problem \eqref{eqn-ambiguityKLball} with $x^{*}$ replaced by the empirical optimal solution to the SAA problem $\min _{x \in \mathcal{X}} \mathbb{E}_{\hat{Q}_{N}}[G(x, \xi)]$, where $\hat{Q}_N$ is the empirical distribution.
    \item[(5)] Bayesian average, that is, solving the Bayesian average problem \eqref{stat-3}, which is the risk-neutral Bayesian average and is equivalent to letting $\epsilon=0$ in Bayesian-DRO.
    \item[(6)] Empirical approach, that is, solving the SAA problem $\min _{x \in \mathcal{X}} \mathbb{E}_{\hat{Q}_{N}}[G(x, \xi)]$.
    \item[(7)] When the distribution of $\xi$ has a finite support $\{\xi_1,\ldots,\xi_m\}$, we compare with Empirical-DRO (KL) in \cite{gotoh2021calibration}. Specifically, we solve the following optimization problem:
    \begin{align*}
        \min_{x \in \mathcal{X}} \max_{Q} \mathbb{E}_{Q} [G(x,\xi)], \quad \text{s.t.} \quad \sum_{i=1}^{m} q_i \log\left(\frac{q_i}{\hat{p}_i}\right) \leq \epsilon, \sum_{i: \hat{p}_i > 0} q_i = 1, q_i \geq 0,
    \end{align*}
    where $Q=[q_1,\cdots,q_m]$, $\hat{p}_i$ is the probability mass on $\xi_i$ in the empirical distribution.
    \item[(8)] 
    We also compare with the DRO - Wasserstein.
That is, we solve the following optimization problem:
    \begin{align}
        \min_{x \in \mathcal{X}} \max_{Q} \mathbb{E}_{Q}[G(x,\xi)], \quad \text{s.t.} \quad W_p(Q,\hat{Q}_N) \leq \tilde{\epsilon},
    \label{eq: numerical_Wasserstein}
    \end{align}
    where $W_p(Q,\hat{Q}_N)$ is the Wasserstein distance of order $p$ between $Q$ and the empirical distribution $\hat{Q}_N$, and $\tilde{\epsilon}$ is the ambiguity set size. The dual of \eqref{eq: numerical_Wasserstein} is given by \cite{Esfahani-Kuhn,Blanchet:2019Wasserstein,gao2016distributionally}:
    \begin{align*}
        \min_{x \in \mathcal{X}, \lambda \geq 0} \lambda \tilde{\epsilon}^{p} + \frac{1}{N} \sum_{i=1}^{N} \sup_{\xi \in \Xi} [G(x,\xi)-\lambda d(\xi, \hat{\xi}_i)^{p}],
    \end{align*}
    where $\Xi$ is the space of $\xi$, $d(\xi, \hat{\xi}_i)$ is the metric (or distance function) between two points $\xi$ and $\hat{\xi}_i$, and $\{\hat{\xi}_i\}_{i=1}^{N}$ are the data points. In our experiments, we consider Wasserstein distance  of order $p = 1, 2$, and the metric is chosen to be Euclidean norm. It is shown in \cite{Esfahani-Kuhn} that, under mild assumptions, the distributionally robust optimization problems over Wasserstein balls can be reformulated as finite convex programs.
\end{enumerate}

When the randomness has finite support, we choose the prior distribution in Bayesian-DRO and Bayesian average to be an uninformative Dirichlet distribution on $\theta$. Sampling from a Dirichlet posterior distribution given the data is the same as Bayesian bootstrapping \cite{LamZhou2008}. Please note that in this case, we implicitly choose the parameterized family to contain all discrete distributions on the support, which is the correct model. Numerical results for finite-support examples are shown in the Online Appendix.

When the distribution of $\xi$ is continuous, we compute the ambiguity set sizes in Bayesian-DRO 
with the following implementation details.
\begin{itemize}
    \item In Bayesian-DRO with ambiguity set size $\epsilon_{1}(\theta)$ and $\epsilon_{2}(\theta)$, the KL divergence from the empirical distribution to the reference distribution is estimated using the estimation method in \cite{perez2008kullback}. Specifically, we compute the empirical cumulative distribution function (cdf) given the data, construct linear interpolation of the empirical cdf, and then we use the finite difference method to compute the estimated KL divergence as:
    \begin{align*}
        \widehat{D}_{\mathrm{KL}}(Q \| f(\cdot ; \theta))=\frac{1}{N} \sum_{i=1}^{N} \log \left(\frac{\delta P_{c}\left(\hat{\xi}_{i}\right)}{\Delta f\left(\hat{\xi}_{i} ; \theta\right)}\right),
    \end{align*}
    where $\left\{\hat{\xi}_{i}\right\}_{i=1}^{N}$ are the data points, $P_{c}$ is the linear interpolation of the empirical cdf, $\delta P_{c}\left(\hat{\xi}_{i}\right)=P_{c}\left(\hat{\xi}_{i}\right)-P_{c}\left(\hat{\xi}_{i}-\Delta\right)$, $\Delta<\min _{i}\left\{\hat{\xi}_{i}-\hat{\xi}_{i-1}\right\}$.
    \item In Bayesian-DRO with ambiguity set size $\epsilon_{3}(\theta)$, to compute the minimum KL ball, we conduct Monte Carlo sampling from $f(\xi|\theta)$. Essentially, we employ SAA to solve the problem
    \begin{align*}
        \min_{q} & \frac{1}{L} \sum_{i=1}^{L} \log(\frac{q(\xi_i)}{f(\xi_i|\theta)}) \frac{q(\xi_i)}{f(\xi_i|\theta)} \\
        & \text{s.t. } \frac{1}{L} \sum_{i=1}^{L} \frac{q(\xi_i)}{f(\xi_i|\theta)} = 1, \quad \frac{1}{L} \sum_{i=1}^{L} \nabla_x G(x^{*},\xi_i) \frac{q(\xi_i)}{f(\xi_i|\theta)} = 0,  \quad q(\xi_i)\geq 0,
    \end{align*}
    where $\xi_1,\cdots,\xi_L$ are $L=100$ samples drawn from $f(\xi|\theta)$. We solve this optimization problem using Gurobi 9.1 with Python 3.7 API and scipy package in Python. Algorithmic description of this approach can be found in Algorithm~\ref{algorithm} in the appendix.
\end{itemize}

We evaluate the performance of each algorithm following the procedure in \cite{gotoh2021calibration}, as follows. All algorithms are run for $K=200$ replications. In each replication $j=1,\cdots, K$, we collect $N$ data points $\hat{\xi}_1,\cdots,\hat{\xi}_N$ drawn i.i.d. from the true distribution $\mathbb{P}_{\theta^{c}}$. Then we run each algorithm with the same data set and obtain its optimal solution, denoted by $x^{(j)}(\epsilon)$, where $\epsilon$ is the corresponding ambiguity set size.
We then compute $\mu^{(j)}(\epsilon)=\mathbb{E}_{\mathbb{P}_{\theta^{c}}}[G(x^{(j)}(\epsilon), \xi)]$ and $v^{(j)}(\epsilon)=\operatorname{Var}_{\mathbb{P}_{\theta^{c}}}[G(x^{(j)}(\epsilon), \xi)]$, i.e.,  the (mean and variance) performance of the obtained solutions under the true system. The out-of-sample mean and variance are then approximated using these $K=200$ replications, with $\hat{\mu}_{N}(\epsilon)=\frac{1}{K}\sum_{j=1}^{K}\mu^{(j)}(\epsilon)$ and $\hat{v}_{N}(\epsilon)=\frac{1}{K}\sum_{j=1}^{K}v^{(j)}(\epsilon)+\frac{1}{K-1}\sum_{j=1}^{K}(\mu^{(j)}(\epsilon)-\hat{\mu}_{N}(\epsilon))^2$.

\subsection{One-dimensional Newsvendor with Continuous Randomness}
In this subsection, we run experiments on a one-dimensional newsvendor problem when the randomness $\xi$ has a continuous distribution and the data all come from the true distribution (see \cite{porteus1990stochastic} for a review on newsvendor models). We summarize notations used in the classical newsvendor problem as follows.
\begin{itemize}
    \item $x$: order amount, assumed to be in $[0, M]$, $M$ is the maximal order amount.
    \item $\xi$: random customer demand.
    \item $b$: backorder cost per unit.
    \item $h$: holding cost per unit.
    \item $c$: ordering cost per unit.
\end{itemize}

The cost function is given by $G(x,\xi)=h (x-\xi)^{+} + b(\xi-x)^{+}+cx$, where $(\cdot)^{+}=\max (\cdot, 0)$. We assume the customer demand $\xi \in \Xi$, where $\Xi=(0,\infty)$.
Parameters used in the newsvendor problem are summarized as follows: maximal ordering amount $M=50$, backorder cost $b=8$, holding cost $h=3$, ordering cost $c=0$.

\begin{figure}[!tbh]
    \centering
    \begin{subfigure}[t]{0.45\textwidth}
        \centering     \includegraphics[width=\textwidth]{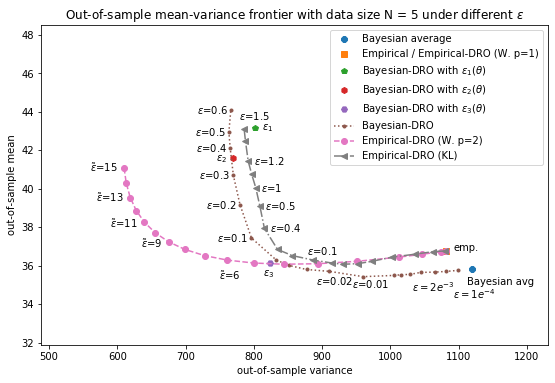}
        \caption{$N=5$.}
        \label{fig: newsvendor_continuous_mis_non_m5}
    \end{subfigure}\hfill%
    \begin{subfigure}[t]{0.45\textwidth}
        \centering     \includegraphics[width=\textwidth]{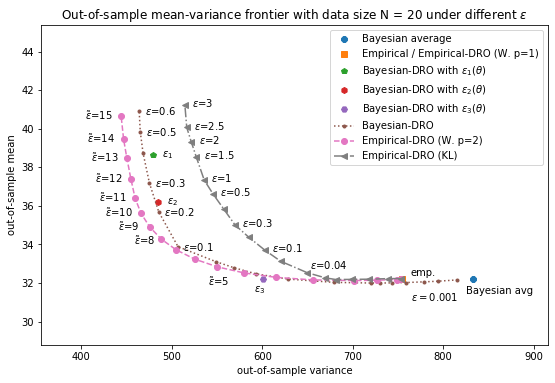}
        \caption{$N=20$.}
        \label{fig: newsvendor_continuous_mis_non_m20}
    \end{subfigure}
    \caption{Newsvendor with continuous support: out-of-sample mean-variance frontiers of different algorithms under different $\epsilon$ values. Data size $N$ is $5$ and $20$ respectively. Bayesian-DRO has model mis-specification.}
    \label{fig: newsvendor_continuous_mis_non}
\end{figure}

\begin{table}[h!]
\centering
{\small
\begin{tabular}{|c|c|c|c|c|c|c|}
\hline
N=5 & $\epsilon_1$ & $\epsilon_2$ & $\epsilon_3$ & Bayesian avg & empirical & true \\
\hline
$\epsilon$ value & 0.58(0.04) & 0.29(0.02) & 0.07(0.01) & - & - & - \\
\hline
solution & 26.13(0.80) & 24.04(0.65) & 18.15(0.38) & 15.46(0.31) & 16.44(0.38) & 17.41 \\
\hline
mean & 43.14(0.33) & 41.62(0.71) & 36.16(0.53) & 35.81(0.31) & 36.77(0.25) & 30.96 \\
\hline
variance & 802.24(2.39) & 769.72(2.11) & 823.97(2.56) & 1119.30(2.73) & 1082.21(2.93) & 640.59 \\
\hline
\end{tabular}
}
{\small
\begin{tabular}{|c|c|c|c|c|c|c|}
\hline
N=20 & $\epsilon_1$ & $\epsilon_2$ & $\epsilon_3$ & Bayesian avg & empirical & true \\
\hline
$\epsilon$ value & 0.34(0.02) & 0.17(0.01) & 0.03(0.00) & - & - & - \\
\hline
solution & 24.36(0.38) & 22.13(0.30) & 18.30(0.17) & 16.16(0.15) & 16.97(0.17) & 17.41 \\
\hline
mean & 38.62(0.54) & 36.19(0.60) & 33.22(0.07) & 32.22(0.06) & 32.20(0.07) & 30.96 \\
\hline
variance & 478.74(1.62) & 485.15(1.49) & 601.01(1.95) & \hspace{0.2cm}832.66(2.38) & \hspace{0.2cm}754.11(2.22) & 640.59\\
\hline
\end{tabular}
}
\\
\caption{Newsvendor with continuous support: out-of-sample performance of variants of Bayesian-DRO with model mis-specification. Data size $N$ is $5$ and $20$ respectively.}
\label{Table: newsvendor_continuous_mis_non}
\end{table}

In the first experiment, we test the performance of our proposed algorithms under model mis-specification. Specifically, the true distribution of the customer demand is normal distribution with mean 10 and variance 100 truncated above 0. In Bayesian-DRO, we choose the parametric family $f(\xi|\theta)$ to be the exponential distribution with rate parameter $\theta$. To have closed-form posterior update, we use the conjugate prior of gamma distribution with parameter $(1,1)$.  Please note this choice of prior distribution is only for computational convenience. If the Bayesian updating does not admit closed-form posterior, we may use Monte Carlo simulation, such as Markov Chain Monte Carlo (MCMC) methods, to draw samples from the posterior;  we only need sample average approximation of the expectations when solving the Bayesian-DRO problem. Figure~\ref{fig: newsvendor_continuous_mis_non} shows the out-of-sample mean-variance frontiers (with varying $\epsilon$ values) of different algorithms for data size $N=5$ and 20. For the empirical approach (abbreviated as empirical), Bayesian average approach (abbreviated as Bayesian average), and Bayesian-DRO with calibrated ambiguity set size $\epsilon_1(\theta), \epsilon_2(\theta),\epsilon_3(\theta)$, their performance is denoted by one point (not a frontier) in the figure. Note that for Empirical-DRO with Wasserstein distance (abbreviated as W. in the figure) of order $p=1$, it is equivalent to the empirical approach (see Remark 6.7 in \cite{Esfahani-Kuhn} and Theorem 3.2 in \cite{lee2021data}) and is independent of the ambiguity set size. Table~\ref{Table: newsvendor_continuous_mis_non} shows the out-of-sample performance of variants of Bayesian-DRO when data size is $N=5$ and 20 respectively; solving the true problem (abbreviated as true) is included as a benchmark for all compared algorithms; standard errors of the average $\epsilon$ values, obtained solutions, and the out-of-sample performances are shown within the parentheses in the table.

\begin{figure}[!tbh]
    \centering
    \begin{subfigure}[t]{0.45\textwidth}
        \centering     \includegraphics[width=\textwidth]{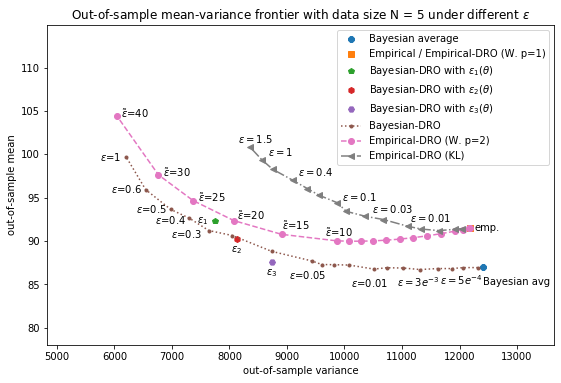}
        \caption{$N=5$.}
        \label{fig: newsvendor_continuous_expon_non_m5}
    \end{subfigure}\hfill%
    \begin{subfigure}[t]{0.45\textwidth}
        \centering     \includegraphics[width=\textwidth]{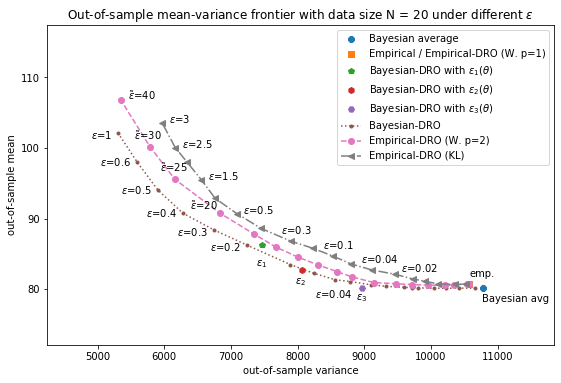}
        \caption{$N=20$.}
        \label{fig: newsvendor_continuous_expon_non_m20}
    \end{subfigure}
    \caption{Newsvendor with continuous randomness: out-of-sample mean-variance frontiers of different algorithms under different $\epsilon$ values. Data size $N$ is $5$ and $20$ respectively. Bayesian-DRO chooses the correct model.}
    \label{fig: newsvendor_continuous_expon_non}
\end{figure}

\begin{table}[h!]
\centering
{\footnotesize
\begin{tabular}{|c|c|c|c|c|c|c|}
\hline
N=5 & $\epsilon_1$ & $\epsilon_2$ & $\epsilon_3$ & Bayesian avg & empirical & true \\
\hline
$\epsilon$ value & 0.35(0.04) & 0.17(0.02) & 0.09(0.00) & - & - & - \\
\hline
solution & 31.91(1.03) & 28.73(0.96) & 27.64(0.83) & 23.30(0.73) & 25.63(1.01) & 25.99 \\
\hline
mean & 92.29(0.90) & 90.25(0.85) & 87.56(0.68) & 86.99(0.72) & 91.50(1.16) & 77.66 \\
\hline
variance & 7743.27(44.87) & 8134.65(44.00) & 8743.18(37.55) & 12409.66(37.35) & 12184.37(42.12) & 9760.90 \\
\hline
\end{tabular}
}
{\footnotesize
\begin{tabular}{|c|c|c|c|c|c|c|}
\hline
N=20 & $\epsilon_1$ & $\epsilon_2$ & $\epsilon_3$ & Bayesian avg & empirical & true \\
\hline
$\epsilon$ value & 0.18(0.01) & 0.09(0.01) & 0.03(0.00) & - & - & - \\
\hline
solution & 33.11(0.87) & 30.95(0.84) & 28.32(0.46) & 24.71(0.39) & 25.31(0.44) & 25.99 \\
\hline
mean & 86.21(0.88) & 82.65(0.83) & 80.18(0.29) & 80.15(0.22) & 80.70(0.25) & 77.66 \\
\hline
variance & 7470.54(34.99) & 8067.69(36.30) & 8968.67(19.86) & 10773.79(19.98) & 10570.40(22.20) & 9760.90 \\
\hline
\end{tabular}
\\
}
\caption{Newsvendor with continuous randomness: out-of-sample performance of variants of Bayesian-DRO without model mis-specification. Data size $N$ is $5$ and $20$ respectively.}
\label{Table: newsvendor_continuous_expon_non}
\end{table}

In the second experiment, we test the performance of our proposed algorithms without model mis-specification. Specifically, the true distribution of the customer demand is exponential distribution with mean 20. We choose the parametric family $f(\xi|\theta)$ to be the correct model, i.e., the exponential distribution with rate parameter $\theta$. Figure~\ref{fig: newsvendor_continuous_expon_non} shows the out-of-sample mean-variance frontiers (with varying $\epsilon$ values) of different algorithms for data size $N=5$ and 20. Table~\ref{Table: newsvendor_continuous_expon_non} shows the out-of-sample performance of variants of Bayesian-DRO when data size is $N=5$ and 20 respectively.

We have the following observations from the two experiments above.

\begin{enumerate}
    \item[(1)] \textbf{Trade-off between out-of-sample mean and variance}: both Bayesian-DRO and Empirical-DRO show the trade-off. As the ambiguity set size $\epsilon$ grows larger, the out-of-sample mean deteriorates, which trades for more robustness in terms of smaller out-of-sample variance. Empirical approach is equivalent to Empirical-DRO with $\epsilon=0$, and Bayesian average is equivalent to Bayesian-DRO with $\epsilon=0$. Therefore, empirical approach and Bayesian average produce solutions with larger out-of-sample variance and smaller out-of-sample mean compared to Empirical-DRO and Bayesian-DRO respectively.
    \item[(2)] \textbf{Model mis-specification affects the performance of Bayesian-DRO}: when there is model mis-specification, Bayesian-DRO underperforms Empirical-DRO with Wasserstein distance of order $p=2$, as can be seen from the worse mean-variance frontier in Figure~\ref{fig: newsvendor_continuous_mis_non}. If we choose the correct model, Bayesian-DRO outperforms Empirical-DRO with Wasserstein distance of order $p=2$, as can be seen from Figure~\ref{fig: newsvendor_continuous_expon_non}. This is expected, since a poorly chosen model, which serves as the reference distribution of the ambiguity set in Bayesian-DRO, deteriorates the performance of Bayesian-DRO. However, the ambiguity set in Bayesian-DRO still provides robustness against model mis-specification, as it can be seen from Figure~\ref{fig: newsvendor_continuous_mis_non}  that Bayesian-DRO (with $\epsilon_3$) has about the same out-of-sample mean but much smaller variance than Bayesian average (which is equivalent to $\epsilon=0$ in Baysian-DRO).
    \item[(3)] \textbf{Bayesian-DRO outperforms Empirical-DRO with KL divergence}: in almost all the experiments, the mean-variance frontier of Bayesian-DRO dominates that of Empirical-DRO (KL). The reason is because the ambiguity sets of Bayesian-DRO contain distributions supported on the domain of the randomness if the prior distribution is chosen to cover the domain, whereas the Empirical-DRO with KL divergence  only allows probability distributions in the ambiguity set that are absolutely continuous with respect to the empirical distribution (i.e., the observed data points) and leaves out distributions supported on the unobserved domain. 
    \item[(4)] \textbf{Parameter-dependent ambiguity set size outperforms pre-specified ones}: for Bayesian-DRO with parameter-dependent ambiguity set size $\epsilon_2(\theta), \epsilon_3(\theta)$, the out-of-sample performances are better compared to Bayesian-DRO with pre-specified ambiguity set size (i.e., fixed $\epsilon$ for all $\theta$). It shows we can gain  better performance  for Bayesian-DRO by tuning an appropriate parameter-dependent ambiguity set size, although this incurs more computational cost.
    \item[(5)] \textbf{Large data size reduces model uncertainty}: as expected, solutions of all the methods become more stabilized (smaller variance) as data size increases. In particular, solution of the empirical approach gets closer to the true optimal solution with more data.
\end{enumerate}

\subsection{Multi-dimensional Newsvendor with Continuous Randomness}
In this subsection, we consider a three-dimensional newsvendor problem with multi-items, where the newsvendor sells three kinds of items (see \cite{turken2012multi} for a review on newsvendor models). Assume the customer demands for each kind of item are independent and
follow normal distribution with mean 10, 12, 15 and standard deviation 20, 20, 20 respectively, truncated above 0. The objective function is given by: $
    G(x, \xi)=\sum_{i=1}^{3} h_{i}\left(x_{i}-\xi_{i}\right)^{+}+b_{i}\left(\xi_{i}-x_{i}\right)^{+}.
$ We set $h_{i}=3, b_{i}=8 \text { for } i=1,2,3$.

\begin{table}[h!]
\centering
{\footnotesize
\begin{tabular}{|c|c|c|c|c|c|c|}
\hline
N=10 & $\epsilon_1$ & $\epsilon_2$ & $\epsilon_3$ & Bayesian avg & empirical & true \\
\hline
$\epsilon$ value & 1.05(0.03) & 0.53(0.02) & 0.17(0.01) & - & - & - \\
\hline
sol error & 20.95(0.50) & 18.84(0.43) & 11.67(0.32) & 10.33(0.28) & 12.37(0.39) & 0.00 \\
\hline
mean & 254.72(2.09) & 238.56(1.83) & 198.21(0.78) & 184.13(0.76) & 190.28(0.95) & 171.28 \\
\hline
variance & 4585.39(35.09) & 4759.10(28.18) & 5845.42(16.55) & 10030.76(20.25) & 8613.13(20.35) & 7066.05 \\
\hline
\end{tabular}
}
\caption{Multi-dimensional newsvendor with continuous randomness: out-of-sample performance of variants of Bayesian-DRO that has model mis-specification. Data size $N$ is 10.}
\label{Table: newsvendor_continuous_high}
\end{table}

The parametric distribution we choose is the exponential distribution with rate parameter $\theta_i$ for each customer demand for item $i$. Figure~\ref{fig: newsvendor_continuous_high} shows the out-of-sample mean-variance frontiers (with varying $\epsilon$ values) of different algorithms when data size $N=10$. Table~\ref{Table: newsvendor_continuous_high} shows the out-of-sample performance of variants of Bayesian-DRO when data size $N=10$; in addition to out-of-sample performance, we also show the solution error, which is obtained by calculating each solution's Euclidean distance from the true optimal solution; standard errors of the average $\epsilon$ values, obtained solution error, and the out-of-sample performances are shown within the parentheses in the table. Similar to the one-dimensional newsvendor problem, \textbf{Bayesian-DRO outperforms Empirical-DRO (KL)} in the multi-dimensional newsvendor problem.

\begin{figure}[t]
    \centering
    \includegraphics[width=0.6\textwidth]{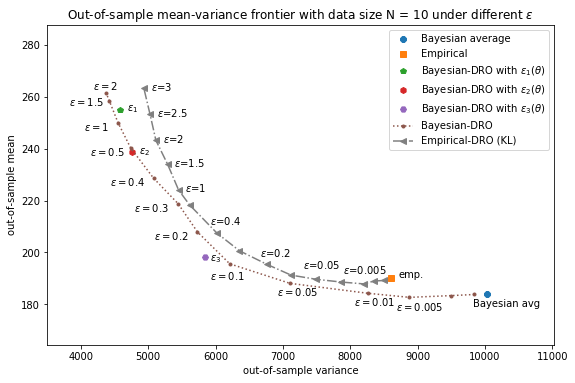}
    \caption{Multi-dimensional newsvendor with continuous randomness: out-of-sample mean-variance frontiers of different algorithms under different $\epsilon$ values. Data size $N$ is 10. Bayesian-DRO has model mis-specification.}
    \label{fig: newsvendor_continuous_high}
\end{figure}

\section{Conclusions and Future Work}
\label{sec-conclusion}
We propose a new formulation, Bayesian Distributionally Robust Optimization (Bayesian-DRO), to address the ambiguity about the probability distribution in static stochastic optimization. Bayesian-DRO takes advantage of Bayesian estimation of parametric distributions and at the same time imposes robustness against the uncertainty introduced by the assumed parametric model. When the ambiguity set is constructed using Kullback-Leibler divergence and the size of the set is small, the robustness of Bayesian-DRO can be interpreted as a trade-off between the posterior mean and standard deviation of the cost function. We show the strong consistency of Bayesian posterior distributions, and subsequently show the convergence of objectives and optimal solutions of Bayesian-DRO problems.  Moreover, we consider several methods of determining the ambiguity set size in Bayesian-DRO.  Our numerical results demonstrate that when data are limited, Bayesian-DRO has superior out-of-sample performance compared to KL-based empirical DRO, the Bayesian-average approach, and the empirical approach; Bayesian-DRO outperforms Wasserstein-based empirical DRO when the parametric family is correctly chosen (i.e., no model mis-specification) but underperforms when there is model mis-specification. More future research is needed to fully understand the connections between these frameworks (Bayesian-DRO, empirical-DRO, BRO) and how to choose a framework for specific data-driven stochastic optimization problems.     


The nature of sequential Bayesian updating makes Bayesian approaches especially amenable to multi-stage (dynamic) settings where data come sequentially in time. One of the future works is to extend Bayesian-DRO to multi-stage stochastic optimization, including multi-stage stochastic programming, stochastic control, and Markov decision processes.

\section*{Acknowledgment}
All authors are grateful for the support by Air Force Office of Scientific Research (AFOSR) under Grant FA9550-22-1-0244. The second and third authors are also grateful for the support by  AFOSR under Grant FA9550-19-1-0283 and National Science Foundation (NSF) under Grant DMS2053489.

\bibliographystyle{plain}
\bibliography{references}

\newpage
\appendix

\section{Supplementary Numerical Experiments}
\subsection{Algorithm 1: Bayesian-DRO with ambiguity set size $\epsilon_3$}
\begin{algorithm}[H]
\SetAlgoLined
\SetKwInOut{Input}{input}\SetKwInOut{Output}{output}
\Input{data points of size $N$, number of $\theta$ samples $N_{\theta}$, number of $\xi$ samples $N_{\xi}$, number of Monte Carlo samples to compute the ambiguity set size $L$}
\Output{optimal solution $x(\epsilon_3)$}
Solve for the SAA solution $x_N^{*}$\;
\For{$i=1 \leftarrow 1$ \KwTo $N_{\theta}$}{
Simulate $\theta_i$ from posterior distribution $p(\theta|\boldsymbol{\xi}^{(N)})$\;
Simulate $\{\xi_j\}_{j=1}^{L}$ from reference distribution $f(\xi|\theta_i)$, solve the optimization problem
\begin{align*}
   & \epsilon_3(\theta_i)=\min_{q}\frac{1}{L} \sum_{j=1}^{L} \log(\frac{q(\xi_j)}{f(\xi_i|\theta_i)}) \frac{q(\xi_i)}{f(\xi_j|\theta)} \\
    & \text{s.t. } \frac{1}{L} \sum_{j=1}^{L} \frac{q(\xi_j)}{f(\xi_j|\theta_i)} = 1, \quad \frac{1}{L} \sum_{j=1}^{L} \nabla_x G(x_N^{*},\xi_j) \frac{q(\xi_j)}{f(\xi_j|\theta_i)} = 0,  \quad q(\xi_j)\geq 0;
\end{align*}
Simulate $\{\hat{\xi}_j\}_{j=1}^{N_{\xi}}$ from reference distribution $f(\xi|\theta_i)$ and store them as dataset $\mathcal{D}_i$\;
}
Solve the Bayesian-DRO problem and obtain the optimal solution $x(\epsilon_3)$
\begin{align*}
    \min _{x \in \mathcal{X}, \lambda_i>0}\left\{\frac{1}{N_{\theta}} \sum_{i=1}^{N_{\theta}}\left( \lambda_{i} \epsilon_3\left(\theta_{i}\right)+\lambda_{i} \log \left(\frac{1}{N_{\xi}} \sum_{\hat{\xi} \in \mathcal{D}_{i}} \exp \left(G(x, \hat{\xi}) / \lambda_{i}\right)\right)\right)\right\}.
\end{align*}
\caption{Bayesian-DRO with ambiguity set size $\epsilon_3$.}
\label{algorithm}
\end{algorithm}

\subsection{One-dimensional Newsvendor with Finite-support Randomness}
In this subsection, we first run experiments on a one-dimensional newsvendor problem when the randomness $\xi$ has a finite support and the data all come from the true distribution. Different from the continuous-support case, the random customer demand is assumed to take discrete values in $\{1,2,\cdots,14,15\}$. The true probability mass $\theta^c \in \Delta_{15}$ is unknown to the decision maker, where $\Delta_{15}$ stands for a probability simplex. Parameters used in the newsvendor problem are summarized as follows. Maximal ordering amount $M=20$, backorder cost $b=10$, holding cost $h=2$, ordering cost $c=3$.

\begin{figure}[!tbh]
    \centering
    \begin{subfigure}[t]{0.45\textwidth}
        \centering     \includegraphics[width=\textwidth]{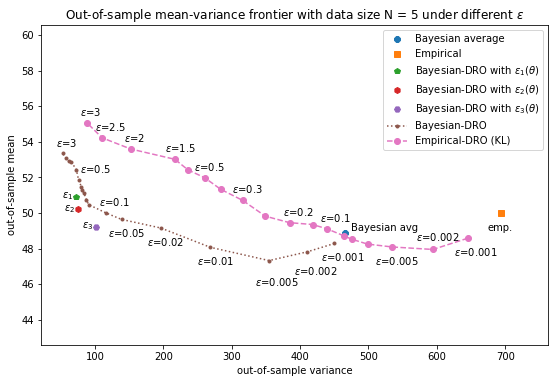}
        \caption{$N=5$.}
        \label{fig: newsvendor_m5_non}
    \end{subfigure}\hfill%
    \begin{subfigure}[t]{0.45\textwidth}
        \centering     \includegraphics[width=\textwidth]{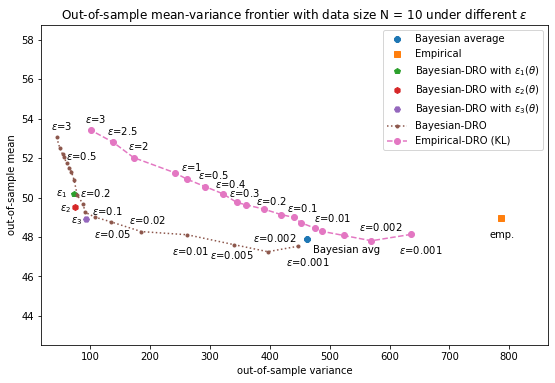}
        \caption{$N=10$.}
        \label{fig: newsvendor_m10_non}
    \end{subfigure}\hfill%
    \begin{subfigure}[t]{0.45\textwidth}
        \centering     \includegraphics[width=\textwidth]{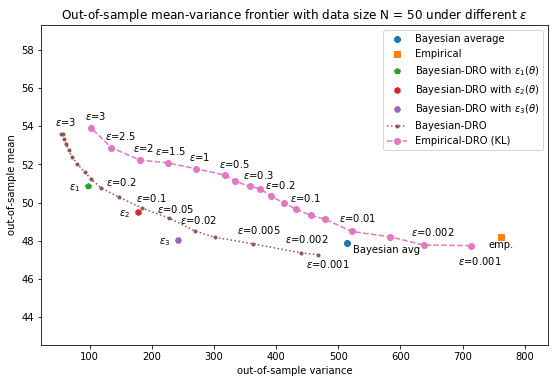}
        \caption{$N=50$.}
        \label{fig: newsvendor_m50_non}
    \end{subfigure}\hfill%
    \begin{subfigure}[t]{0.45\textwidth}
        \centering     \includegraphics[width=\textwidth]{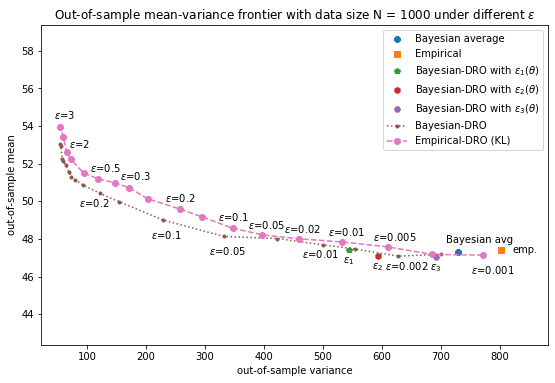}
        \caption{$N=1000$.}
        \label{fig: newsvendor_m1000_non}
    \end{subfigure}
    \caption{Newsvendor with finite-support randomness: out-of-sample mean-variance frontiers of different algorithms under different $\epsilon$ values. Data size varies from $5,10,50$ to $1000$.}
    \label{fig: newsvendor_non}
\end{figure}

Figure~\ref{fig: newsvendor_non} shows the out-of-sample mean-variance frontiers (with varying $\epsilon$ values) of different algorithms for data sizes $N=5$, 10, 50 and 1000. Table~\ref{Table: newsvendor_non} shows the out-of-sample performance of each algorithm when data size is $N=5,10,50,1000$ respectively. Similar to the continuous-support case, \textbf{Bayesian-DRO performs better than Empirical-DRO in most cases}, as the mean-variance frontier of Bayesian-DRO dominates that of empirical-DRO (KL). Note that for a small data size, Empirical-DRO (KL) will only put non-negative probability mass on the support point $\hat{\xi}$ that has been observed in the data. On the other hand, by imposing an appropriate prior (in this problem we use a non-informative Dirichlet prior whose domain is a uniform distribution on the support of $\xi$), Bayesian-DRO can put non-negative probability mass on all the support points. Also note that the mean-variance frontiers of Bayesian-DRO and Empirical-DRO get closer as the data size $N$ goes to infinity due to the reduced model uncertainty.

\begin{table}[!tbh]
\centering
{\small
\begin{tabular}{|c|c|c|c|c|c|c|}
\hline
N=5 & $\epsilon_1$ & $\epsilon_2$ & $\epsilon_3$ & Bayesian avg & empirical & true \\
\hline
$\epsilon$ value & \hspace{0.06cm} 0.99(0.02) & \hspace{0.06cm} 0.50(0.01) & \hspace{0.06cm} 0.14(0.01) & - & - & - \\
\hline
solution & 12.77(0.17) & 12.70(0.17) & 11.29(0.14) & 8.59(0.06) & 8.55(0.24) & 7.00 \\
\hline
mean & 50.92(0.43) & 50.22(0.50) & 49.24(0.47) & 48.90(0.02) & 50.01(0.28) & 47.21 \\
\hline
variance & \hspace{0.06cm} 72.27(2.15) & \hspace{0.06cm} 75.12(2.28) & 101.24(2.38) & 465.08(2.91) & 693.63(3.63) & 770.56 \\
\hline
\end{tabular}
}
{\small
\begin{tabular}{|c|c|c|c|c|c|c|}
\hline
N=10 & $\epsilon_1$ & $\epsilon_2$ & $\epsilon_3$ & Bayesian avg & empirical & true \\
\hline
$\epsilon$ value & \hspace{0.06cm} 0.60(0.01) & \hspace{0.06cm} 0.30(0.01) & \hspace{0.06cm} 0.07(0.01) & - & - & - \\
\hline
solution & 12.28(0.16) & 12.20(0.14) & 10.61(0.10) & 8.57(0.09) & 7.50(0.20) & 7.00 \\
\hline
mean & 50.20(0.37) & 49.54(0.28) & 48.93(0.30) & 47.87(0.05) & 48.94(0.15) & 47.21 \\
\hline
variance & \hspace{0.06cm} 71.99(2.07) & \hspace{0.06cm} 74.97(2.09) & \hspace{0.05cm} 91.97(2.14) & 461.88(2.55) & 786.58(3.56) & 770.56 \\
\hline
\end{tabular}
}
{\small
\begin{tabular}{|c|c|c|c|c|c|c|}
\hline
N=50 & $\epsilon_1$ & $\epsilon_2$ & $\epsilon_3$ & Bayesian avg & empirical & true \\
\hline
$\epsilon$ value & \hspace{0.06cm} 0.14({0.00}) & \hspace{0.06cm} 0.07({0.00}) & \hspace{0.06cm} 0.02({0.00}) & - & - & - \\
\hline
solution & 11.24({0.04}) & 10.91({0.05}) & 9.26({0.06}) & 7.99({0.10}) & 7.36({0.13}) & 7.00 \\
\hline
mean & 50.44({0.05}) & 49.51({0.06}) & 48.22({0.04}) & 47.87({0.04}) & 48.21({0.05}) & 47.21 \\
\hline
variance & 118.65({1.99}) & 178.06({1.88}) & 241.68({1.89}) & 513.35({2.08}) & 761.45({2.86}) & 770.56 \\
\hline
\end{tabular}
}
{\small
\begin{tabular}{|c|c|c|c|c|c|c|}
\hline
N=1000 & $\epsilon_1$ & $\epsilon_2$ & $\epsilon_3$ & Bayesian avg & empirical & true \\
\hline
$\epsilon$ value & 0.006({0.00}) & 0.003({0.00}) & 0.001({0.00}) & - & - & - \\
\hline
solution & 8.08({0.01}) & 7.82({0.01}) & 7.25({0.03}) & 7.18({0.05}) & 6.93({0.06}) & 7.00 \\
\hline
mean & 47.42({0.03}) & 47.12({0.03}) & 47.03({0.02}) & 47.32({0.01}) & 47.39({0.02}) & 47.21\\
\hline
variance & 544.42({0.92}) & 593.85({1.29}) & 691.00({1.15}) & 728.15({1.47}) & 801.66({1.50}) & 770.56 \\
\hline
\end{tabular}
}
\\
\caption{Newsvendor with finite-support randomness: out-of-sample performance of variants of Bayesian-DRO. Data size $N$ varies from 5, 10, 50 to 1000.}
\label{Table: newsvendor_non}
\end{table}

Next, we consider a contaminated data model, where 80$\%$ data are generated from the true distribution and 20$\%$ data are generated from an arbitrary distribution. In particular, the arbitrary distribution is randomly generated (specified by its probability mass) and is different in each replication. Figure~\ref{fig: newsvendor_con} shows the out-of-sample mean-variance frontiers  (with varying $\epsilon$ values) of different algorithms for data size $N=5$ and $50$. Table~\ref{Table: newsvendor_con} shows the out-of-sample performance of all variants of Bayesian-DRO when data size is $N=5$ and $50$ respectively. Similar to the non-contaminated case, \textbf{Bayesian-DRO outperforms other benchmarks} even when data are contaminated. Note that the solution of the empirical approach does not get closer to the true optimal solution with more data, since part of the data are not from the true distribution and possibly become outliers.

\begin{figure}[!tbh]
    \centering
    \begin{subfigure}[t]{0.45\textwidth}
        \centering     \includegraphics[width=\textwidth]{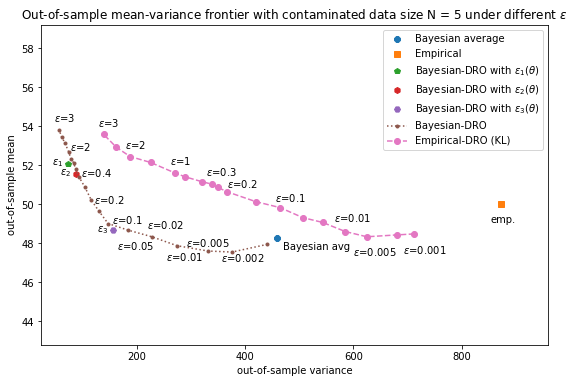}
        \caption{$N=5$.}
        \label{fig: newsvendor_m5_con}
    \end{subfigure}\hfill%
    \begin{subfigure}[t]{0.45\textwidth}
        \centering     \includegraphics[width=\textwidth]{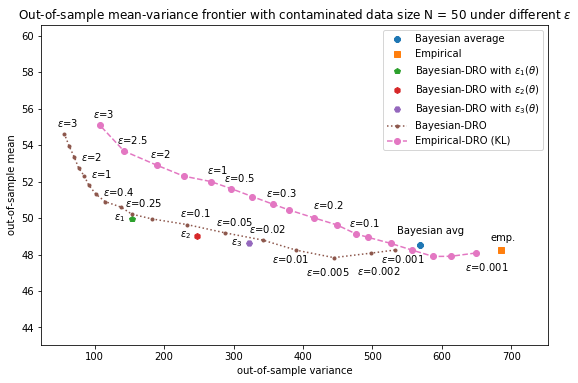}
        \caption{$N=50$.}
        \label{fig: newsvendor_m50_con}
    \end{subfigure}
    \caption{Newsvendor with finite support: out-of-sample mean-variance frontiers of different algorithms under different $\epsilon$ values with contaminated data. Data size $N$ is 5 and 50 respectively.}
    \label{fig: newsvendor_con}
\end{figure}

\begin{table}[h!]
\centering
{\small
\begin{tabular}{|c|c|c|c|c|c|c|}
\hline
N=5 & $\epsilon_1$ & $\epsilon_2$ & $\epsilon_3$ & Bayesian avg & empirical & true \\
\hline
$\epsilon$ value & 1.00({0.02}) & 0.50({0.01}) & 0.07({0.01}) & - & - & - \\
\hline
solution & 12.02({0.10}) & 11.72({0.13}) & 10.01({0.16}) & 8.59({0.08}) & 7.41({0.24}) & 7.00 \\
\hline
mean & 52.09({0.16}) & 51.58({0.27}) & 48.68({0.26}) & 48.29({0.05}) & 50.04({0.21}) & 47.21 \\
\hline
variance & \hspace{0.06cm} 73.12({1.97}) & \hspace{0.06cm} 87.09({1.98}) & 155.98({2.27}) & 459.39({2.47}) & 872.79({4.43}) & 770.56 \\
\hline
\end{tabular}
}
{\small
\begin{tabular}{|c|c|c|c|c|c|c|}
\hline
N=50 & $\epsilon_1$ & $\epsilon_2$ & $\epsilon_3$ & Bayesian avg & empirical & true \\
\hline
$\epsilon$ value & 0.13({0.00}) & 0.06({0.00}) & 0.02({0.00}) & - & - & - \\
\hline
solution & 11.32({0.05}) & 10.72({0.06}) & \hspace{0.06cm} 9.18({0.05}) & 7.86({0.11}) & 7.46({0.12}) & 7.00\\
\hline
mean & 49.97({0.06}) & 49.04({0.07}) & 48.63({0.11}) & 48.53({0.03}) & 48.26({0.04}) & 47.21 \\
\hline
variance & 153.08({1.41}) & 247.25({1.53}) & 321.62({1.63}) & 568.38({2.58}) & 684.59({2.69}) & 770.56 \\
\hline
\end{tabular}
}
\\
\caption{Newsvendor with finite support: out-of-sample performance of variants of Bayesian-DRO algorithm with contaminated data. Data size $N$ is 5 and 50 respectively.}
\label{Table: newsvendor_con}
\end{table}

\subsection{Multi-dimensional Portfolio Optimization with Finite-support Randomness}
In this subsection, we consider a five-dimensional portfolio optimization problem when the randomness $\xi$ has finite support and the data all come from the true distribution. We summarize notations used in the portfolio optimization problem as follows.
\begin{itemize}
    \item $x$: holding positions of assets. $x \in [0,1]^{5}$, $\sum_{i=1}^{5}x_i=1$.
    \item $\xi$: random returns of assets. $\xi_i$ takes values in $\{-1,0,1\}$ for $i=1,\cdots,5$.
\end{itemize}

The cost function is given by $G(x,\xi)=-\xi^{\top} x$. Note that we do not allow shorting (i.e., $x_i > 0, i=1,\cdots,5$) and impose a budget constraint ($\sum_{i=1}^{5} x_i=1$). The true probability mass of dimension $i$, denoted by $\theta^c_i \in \Delta_3$, is unknown to the decision maker.

\begin{figure}[!tbh]
    \centering
    \begin{subfigure}[t]{0.45\textwidth}
        \centering     \includegraphics[width=\textwidth]{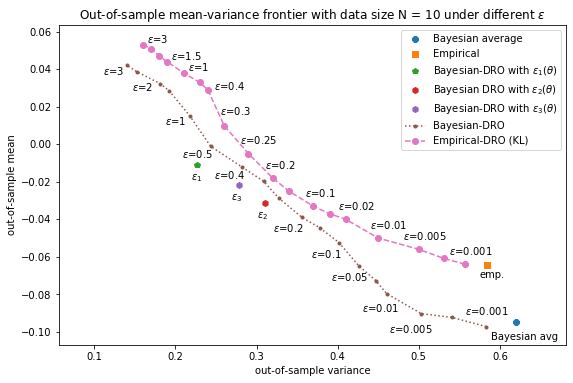}
        \caption{$N=10$.}
        \label{fig: portfolio_m10_non}
    \end{subfigure}\hfill%
    \begin{subfigure}[t]{0.45\textwidth}
        \centering     \includegraphics[width=\textwidth]{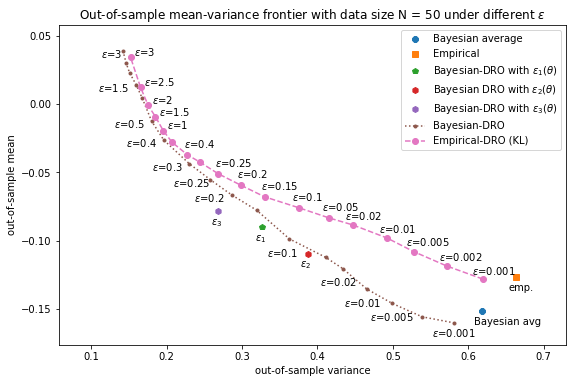}
        \caption{$N=50$.}
        \label{fig: portfolio_m50_non}
    \end{subfigure}
    \caption{Portfolio optimization with finite support: out-of-sample mean-variance frontiers of different algorithms under different $\epsilon$ values. Data size $N$ is 10 and 50 respectively.}
    \label{fig: portfolio_non}
\end{figure}

\begin{table}[htp]
\centering
{\small
\begin{tabular}{|c|c|c|c|c|c|c|}
\hline
N=10 & $\epsilon_1$ & $\epsilon_2$ & $\epsilon_3$ & Bayesian avg & empirical & true \\
\hline
$\epsilon$ value & 0.49({0.01}) & 0.25({0.01}) & 0.47({0.02}) & - & - & - \\
\hline
sol error & 0.73({0.01}) & 0.70({0.02}) & 0.79({0.01}) & 0.76({0.05}) & 0.80({0.04}) & 0.00 \\
\hline
mean & -0.01({0.00}) & -0.03({0.00}) & -0.02({0.00}) & -0.09({0.01}) & -0.06({0.01}) & -0.17 \\
\hline
variance & 0.23({0.01}) & 0.31({0.01}) & 0.28({0.00}) & 0.62({0.01}) & 0.58({0.02}) & 0.59 \\
\hline
\end{tabular}
}
{\small
\begin{tabular}{|c|c|c|c|c|c|c|}
\hline
N=50 & $\epsilon_1$ & $\epsilon_2$ & $\epsilon_3$ & Bayesian avg & empirical & true \\
\hline
$\epsilon$ value & 0.10({0.00}) & 0.05({0.00}) & 0.18({0.01}) & - & - & - \\
\hline
sol error & 0.53({0.02}) & 0.48({0.02}) & 0.61({0.01}) & 0.33({0.04}) & 0.40({0.04}) & 0.00 \\
\hline
mean & -0.09({0.00}) & -0.11({0.00}) & -0.08({0.00}) & -0.15({0.00}) & -0.13({0.00}) & -0.17 \\
\hline
variance & 0.33({0.01}) & 0.39({0.01}) & 0.27({0.00}) & 0.62({0.01}) & 0.66({0.01}) & 0.59\\
\hline
\end{tabular}
}
\\
\caption{Portfolio optimization with finite support: out-of-sample performance of variants of Bayesian-DRO. Data size $N$ is 10 and 50 respectively.}
\label{Table: portfolio_non}
\end{table}

Figure~\ref{fig: portfolio_non} shows the out-of-sample mean-variance frontiers (with varying $\epsilon$ values) of different algorithms for data size $N=5$ and $50$. Table~\ref{Table: portfolio_non} shows the out-of-sample performance of variants of Bayesian-DRO when data size is $N=10$ and 50 respectively. In addition to out-of-sample performance, we also show the solution error, which is obtained by calculating each solution's Euclidean distance from the true optimal solution, with sample standard deviation within the parentheses in the tables. Similar to the one-dimensional newsvendor problem, \textbf{Bayesian-DRO outperforms other benchmarks} in the multi-dimensional portfolio optimization problem.

\end{document}